\renewcommand{\thefootnote}{\fnsymbol{footnote}}
\newcommand{\innerp}[1]{\langle {#1} \rangle}
\newcommand{\abs}[1]{\lvert#1\rvert}
\newcommand{\bs}[1]{\boldsymbol{#1}}
\newcommand{\bsk}{\boldsymbol{k}}
\newcommand{\bsj}{\boldsymbol{j}}
\newcommand{\bsn}{\boldsymbol{n}}
\newcommand{\bx}{\bs{x}}
\newcommand{\bz}{\bs{z}}
\def\vx{\mathbf x}
\def\vPhi{\mathbf \Phi}
\def\vc{\mathbf c}
\def\vv{\mathbf v}
\def\be{\begin{equation}}
\def\ee{\end{equation}}
\def\R{{\mathbb R}}
\def\N{{\mathbb N}}
\newcommand{\normmm}[1]{{\left\vert\kern-0.25ex\left\vert\kern-0.25ex\left\vert #1
    \right\vert\kern-0.25ex\right\vert\kern-0.25ex\right\vert}}
\newcounter{parentnumber}
\crefname{theorem}{Theorem}{Theorems}
\title{A gradient enhanced $\ell_1$-minimization for sparse approximation of polynomial chaos expansions}
\author{Ling Guo}
\thanks{Ling Guo. Department of Mathematics, Shanghai Normal University, Shanghai, China. L.~Guo was partially suppoted by NSFC (11671265),
and Program for Outstanding Academic leaders in Shanghai City (No.151503100). Email: lguo@shnu.edu.cn.}
\author{Akil Narayan}
\thanks{Akil Narayan. Mathematics Department and Scientific Computing and Imaging Institute, University of Utah, University of Utah, Salt Lake City, UT 84112.
Email: akil@sci.utah.edu. A.~Narayan was partially supported by AFOSR FA9550-15-1-0467 and DARPA N660011524053}
\author{Tao Zhou}
\thanks{Tao Zhou. LSEC, Institute of Computational Mathematics and Scientific/Engineering Computing, AMSS, Chinese Academy of Sciences, Beijing, China.
Email: tzhou@lsec.cc.ac.cn. T.~Zhou is partially supported by the National Natural Science Foundations of China (under grant numbers 91630312, 91630203, 11571351, 11688101, and 11731006), the science challenge project (No. TZ2018001), NCMIS, and the youth innovation promotion association (CAS).}
\begin{document}
\maketitle
\renewcommand{\thefootnote}{\fnsymbol{footnote}}

\begin{abstract}
We investigate a gradient-enhanced $\ell_1$-minimization for constructing sparse polynomial chaos expansions. In addition to function evaluations, measurements of the function gradient is also included to accelerate the identification of expansion coefficients. By designing appropriate preconditioners to the measurement matrix, we show gradient-enhanced $\ell_1$ minimization leads to stable and accurate coefficient recovery. The framework for designing preconditioners is quite general and it applies to recover of functions whose domain is bounded or unbounded. Comparisons between the gradient enhanced approach and the standard $\ell_1$-minimization are also presented and numerical examples suggest that the inclusion of derivative information can guarantee sparse recovery at a reduced computational cost.
\end{abstract}



\pagestyle{myheadings}
\thispagestyle{plain}
\markboth{Gradient enhanced $\ell_1$-minimization}{}

\section{Introduction}
Uncertainty quantification (UQ) aims to develop numerical methods that can accurately approximate quantities of interest (QoI) of a complex engineering system and facilitate the quantitative validation of the simulation model. One challenge in UQ is in building surrogates for approximation of a parameterized simulation model, often involving differential equations. To characterize the uncertainty that parameters effect on such a system, one usually models the uncertain inputs as a $d$-dimensional vector of independent random variables $\bx=(x_1,\ldots,x_d)$. The QoI $f$ that we seek to approximate is a function of these random parameters, $f(\bx): \mathbb{R}^d\to \mathbb{R}$. Here we will approximate $f(\bx)$ with a generalized Polynomial Chaos Expansion (PCE) \cite{R.Ghanem1991_SFE,Xiu2002_WPC}. In this situation, we assume $f$ can be well-approximated as a finite expansion in multivariate orthogonal polynomials, and the key step is to determine the expansion coefficients.

Recently, stochastic collocation methods have been identified as effective strategies to compute PCE coefficients \cite{A.Narayan15_SC}. Stochastic collocation allows one to treat existing deterministic simulation models as black box routines in a larger pipeline for performing parametric analysis with PCE. Popular stochastic colloation approaches include sparse grids approximations \cite{Agarwal_2009domainadasc, Bieri_2011SCC, Eldred,GZ, Mazabaras, F.Nobile08_s}, pseudospectral projections \cite{reagan}, and least squares
approaches \cite{J.hampton2015Cm,Tang_2014DLSp,Zhou_Narayan_Xu, Chkifa_2015dlsp, Narayan_2016Christoffel,ZNX,Guo_2018}. Each of these methods requires repeated queries of the black-box simulation model.

In many practical applications, scarce computational resources limit the number of possible queries for the black-box simulatino model, thus limiting the amount of available information about the function $f$, and this makes accurate approximation of the PCE coefficients a difficult task. One popular computational strategy that constructs PCE approximations with limited information is stochastic colloation via $\ell_1$-minimization  \cite{Doostan11_nonada,Yan12_Sc,pengdoostan,J.Hampton2015Cs,JNZ,Guo_2016scrqgauss}. The approach is very effective when the number of non-zero terms in the PCE approximation of the model output is small (i.e. $f$ has a sparse represenation in the PCE basis) or the magnitude of the PCE coefficients decays rapidly (i.e. the PCE expanstion of $f$ has a compressible representation).

In this paper, we consider a gradient enhanced  $\ell_1$-minimization approach for constructing PCE ceofficients. We consider $\ell_1$ minimization with both function and gradient evaluations. Recent advances \cite{Roderick10_Pr,Alekseev,Li11_Oba,Baar,Lockwood13_Gb,jakeman,Peng_2016gradient} have shown that the inclusion of derivative evaluations have the potential to greatly enhance the construction of surrogates especially if those derivatives can be obtained inexpensively, e.g. by solving adjoint equations \cite{Griewank_2003ad}. Potential applications of this approach also include Hermite-type interpolative approximations \cite{XUZHOU,Ben,Ward}. The gradient enhanced approach here can be viewed as a Hermite-type interpolation, however, such an approach differs from classical Hermite interpolation (see e.g., \cite{Interpolation_1,Interpolation_2,Interpolation_3,Learning_1}), since this approach seeks to finding a \textit{sparse} representation.

The main contribution of this work is to present a general framework to include the gradient evaluations in an $\ell^1$ minimization framework. More precisely, we design appropriate preconditioners for the measure matrix, and we show that the inclusion of these derivative measurements can almost-surely lead to improved conditions for a successful solution recovery. The framework is quite general, and it applies to approximation of functions with either bounded or unbounded domain. Comparisons between the gradient-enhanced approach and standard $\ell_1$-minimization are also presented, and numerical examples suggest that the inclusion of derivative information can guarantee sparse recovery at a reduced computational cost.

The rest of the paper is organized as follows. In section 2, we present some preliminaries for the collocation methods with $\ell_1$ minimization, we call this the ``standard" approach.  The gradient-enhanced $\ell_1$ minimization approach is presented in Section 3, and this is followed by some further discussions in Section 4. Numerical examples are provided in Section 5, and we finally give some conclusions in Section 6.

\section{Preliminaries}
\subsection{Generalized polynomial chaos expansions}
Let $\bx=(x_1 , \ldots , x_d)^\top$ be a random vector with $d$ mutually independent components; each $x_i$ takes values in $\Gamma^i \subset \mathbb{R}.$ Since the variables $\{x_i\}_{i=1}^d$ are mutually independent, their marginal probability density functions $\rho_i$, associated with random variable $x_i$, completely characterize the distribution of $\bx$. Define $\Gamma:= \otimes_{i=1}^d\Gamma^i \subset \mathbb{R}^d,$ and let $\rho(\bx)= \prod_{i=1}^d \rho_i(x_i): \Gamma\rightarrow \mathbb{R}^+$ denote the joint probability density function of $\bs{x}.$

Our objective is to approximate the QoI $f(\bx):\Gamma \to \mathbb{R}$. In a simple stochastic collocation approach, we wish to recover information about this function from limited set of function evaluations. In this paper, we seek this approximation using a PCE and so we first introduce the multivariate orthogonal PCE basis.

For each marginal density $\rho_i,$  we can define the univariate PCE basis elements, $\varphi^i_n$, which are polynomials of degree $n$, via the orthogonality relation
\begin{align}\label{eq:phi-orthonormality}
  \mathbb{E} \left[\varphi^i_n(x_i) \varphi^i_{\ell}(x_i)\right] = \int_{\Gamma^i} \varphi^i_n(x_i) \varphi^i_{\ell}(x_i) \rho_i(x_i) dx_i = \delta_{n,\ell}, \quad n,\,\,\ell \geq 0,
\end{align}
with $\delta_{n,\ell}$ the Kronecker delta function. Up to a multiplicative sign, this defines the polynomials $\varphi^i_n$ uniquely; thus the probability measure $\rho_i$ determines the type of orthogonal polynomial basis. For example, the Gaussian (normal) distribution yields the Hermite polynomials, the uniform distribution pairs with Legendre polynomials, etc. For a more detailed account of the correspondence, see \cite{Xiu2002_WPC}.

One convenient representation for a multivariate gPC basis is as a product of the univariate gPC polynomials in each direction. We define
\begin{align}
\label{gpcbais}
  \psi_{\bsn}(\bx) \coloneqq \prod_{i=1}^d \varphi^{i}_{n_i}\left(x_{i}\right),
\end{align}
where $\bsn = \left(n_1, \ldots, n_d\right) \in \N_0^d$ is a multi-index set with $|\bsn| = \sum_{i=1}^d n_i$.  The product functions $\psi_{\bsn}$ are $L^2$ orthogonal under the joint probability density function $\rho$ for $\bx$:
\begin{align}
\label{othorgonal}
\mathbb{E} \left[\psi_{\bsn}(\bx) \psi_{\bsj}(\bx)\right] &= \int_{\Gamma} \psi_{\bsn}(\bx) \psi_{\bsj}(\bx) \rho(\bx) d\bx = \delta_{\bsn,\bsj}, & \bs{n}, \bs{j} &\in \N_0^d
\end{align}
where $\delta_{\bsn, \bsj} = \prod_{i=1}^d \delta_{n_i, j_i}$.

We denote by $T_n^d$ the total degree space, i.e., the space of $d$-variate algebraic polynomials of degree $n$ or less. An element $f_n$ in $T_n^d$ has a unique expansion in the $\phi_{\bs{n}}$ basis:
\begin{align}
\label{eq:pce1}
f_n=\sum_{\bsk \in \Lambda^T_{n}}c_{\bsk}\psi_{\bsk}(\bx),
\end{align}
where $ \Lambda^T_{n}$ is the total-degree multi-index set,
\begin{align*}
  \Lambda^T_{n} &\coloneqq \left\{ \bsk \in \mathbb{N}_0^d\,\, \big| \,\, \sum_{i=1}^d k_i \leq n \right\}
\end{align*}
The dimension of $T_n^d$ is
\begin{align}\label{eq:td-dim}
M=  \left|\Lambda^T_{n}\right| \coloneqq \dim T_n^d = \left( \begin{array}{c} d+n \\ n \end{array}\right).
\end{align}
By defining an(y) total order on the elements of $\Lambda^T_n$, we can re-write \eqref{eq:pce1} as the following scalar-indexed version
\begin{align}
\label{eq:pce}
f_n=\sum_{\bsk \in \Lambda^T_{n}}c_{\bsk}\psi_{\bsk}(\bx)=\sum_{j=1}^{M}{c}_j \psi_j(\bx),
\end{align}
where $\bs{c} \in \R^M$ contains the vector of expansion coefficients, and hence uniquely defines a function $f_n$.


 \subsection{The compressed sensing approach.} In recent years, stochastic collocation  via compressive sensing is one of the popular approaches to determine the
 coefficients $c_j$ in (\ref{eq:pce}). Such as approach uses fewer evaluations, and seeks to compute a PCE approximation with a sparse coefficient vector.  We denote by $\Xi \subset \Gamma$ a set of samples, i.e.,
\begin{align*}
  \Xi :=\{\bs{z}^{(1)},...,\bs{z}^{(N)}\} \subset \Gamma.
\end{align*}
We will eventually take $\Xi$ as a collection of $N$ iid samples of a random variable. The standard compressed sensing approach attempts the $\ell_0$ approach,
 \begin{align}
\label{eq:l0minimization}
\argmin_{\mathbf{c}\in \mathbb{R}^M} \|\mathbf{c}\|_0 \quad \text{subject to} \quad \mathbf{\Phi} \mathbf{c} = \mathbf{f},
\end{align}
where $\mathbf{f}=(f(\bs{z}^{(1)}),...,f(\bs{z}^{(N)}))^T$, $\mathbf{c}=(c_1,\dots, c_M)^T \in \mathbb{R}^M$ is the unknown coefficient vector to be determined that defines the PCE expansion \eqref{eq:pce}, and $\mathbf{\Phi} \in \mathbb{R}^{N \times M}$ is the measurement matrix whose entries are
 \begin{eqnarray}\label{matrixelem}
   [\mathbf{\Phi}]_{ij}=\psi_j(\bs{z}^{(i)}), \quad i=1,\dots, N, \quad  j=1,\dots, M.
 \end{eqnarray}
 The $\ell_0$ norm $\|\bs{c}\|_0$ is the number of nonzero entries (the ``sparsity") of the vector $\bs{c}$. The convex relaxation of the above problem is the following $\ell_1$ approach
\begin{align}
\label{eq:l1minimization}
\argmin_{\mathbf{c}\in \mathbb{R}^M} \|\mathbf{c}\|_1 \quad \text{subject to} \quad \mathbf{\Phi} \mathbf{c} = \mathbf{f},
\end{align}
where $\|\bs{c}\|_1$ is the standard $\ell_1$ norm on finite-dimensional vectors. The interpolation condition $\mathbf{\Phi c=f}$ can be relaxed to $\mathbf{\|\Phi c-f\|_2}\leq \epsilon$, for some tolerance value $\epsilon$ and with $\|\cdot\|_2$ the vector Euclidean norm, resulting in a regression type "denoising" approach.

Fixing $M$, certain conditions on $N$ and $\bs{\Phi}$ can guarantee that the $\ell_1$-relaxed minimization \eqref{eq:l1minimization} produces the sought solution to the $\ell_0$ problem \eqref{eq:l0minimization}. Several types of such sufficient conditions on $\vPhi$ have been presented in the compressive sampling (CS) literature, such as the mutual incoherence property (MIP) and restricted isometry property (RIP). Our invesigation in this paper concerns the MIP: The mutual incoherence constant (MIC) of $\mathbf{\Phi}$ is defined as
\begin{equation}\label{eq:MIC}
\mu\,\,=\,\,\mu(\mathbf{\Phi})\,\,:=\,\, \max_{k\neq j}\frac{\abs{\innerp{\mathbf{\Phi}_k, \mathbf{\Phi}_j}}}{\|\mathbf{\Phi}_k\|_2 \cdot \|\mathbf{\Phi}_j\|_2},
\end{equation}
where $\bs{\Phi}_j$ is the $j$th column of $\bs{\Phi}$. Assume that $\mathbf{c}_0$ is an $s$-sparse vector in $\mathbb{C}^M$, i.e.,  $\|\vc\|_0\leq s,$ and if
\begin{equation}\label{eq:BPMIC}
\mu \,\,<\,\,\frac{1}{2s-1},
\end{equation}
then the solution to the $\ell_1$ minimization (\ref{eq:l1minimization}) with $\mathbf{f}=\mathbf{\Phi}\mathbf{c}_0$ is exactly $\mathbf{c}_0$, i.e.,
$$
 \mathbf{c}_0=\argmin_{\mathbf{c}\in \C^M} \left\{\|\mathbf{c}\|_1  \,\, \text{\rm subject to}\,\, \mathbf{\Phi}\mathbf{c}=\mathbf{\Phi}\mathbf{c}_0\right\}.
$$
This result was first presented in \cite{DonHuo} for the case with $\mathbf{\Phi}$ being the union of two orthogonal matrices, and was later extended to general matrices by Fuchs \cite{Fuchs} and Gribonval \& Nielsen \cite{GrNi}.  In \cite{Cai_Wang_Xu}, it is also shown that
 $\mu < \frac{1}{2s-1}$ is sufficient for stable approximation of $\mathbf{c}$ in the noisy case.

\section{A gradient enhanced compressed sensing approach}
We consider inclusion of gradient measurements in an $\ell_1$ optimization approach for compressed sensing. The motivation is that the gradient measurements can usually be obtained in a relatively inexpensive way from model simulations, e.g, by using the adjoint techniques \cite{Griewank_2003ad}. Consider the availability of the following data:
\begin{align*}
&y=f(\bs{z}),  \qquad \qquad \bs{z}\in \Xi,\\
&\partial_k(y)=\partial_k f(\bs{z}), \quad \,\,\bs{z}\in \Xi, \quad k=1,...d,
\end{align*}
where $\partial_kf(\bx) = \frac{\partial f(\bx)}{\partial x_k}$ stands for the derivative with respect to the $k$th variable $x_k$.

Then concatenating all the measurement conditions above into matrix-vector format in an $\ell_1$ optimization problem yields the following approach:
\begin{align}\label{eq:gradientl1}
\argmin_{\mathbf{c}\in \mathbb{R}^M}\|\mathbf{c}\|_1 \quad \textmd{subject} \ \textmd{to} \quad   \mathbf{W}\tilde{\mathbf{\Phi}}\mathbf{P}\mathbf{c}=\mathbf{W}\tilde{\mathbf{f}}
\end{align}
with
\begin{align*}
\tilde{\mathbf{f}}=\left(\begin{array}{l}
\mathbf{f} \\
\mathbf{f}_{\partial}
\end{array}\right), \quad
\tilde{\mathbf{\Phi}}=\left(\begin{array}{l}
\mathbf{\Phi} \\
\mathbf{\Phi}_{\partial}
\end{array}\right),\quad
\mathbf{\Phi}_{\partial}=& \left[ \begin{array}{c}
\frac{\partial\mathbf{\Phi}}{\partial x_1} \\
\vdots \\
 \frac{\partial\mathbf{\Phi}}{\partial x_d}\\\end{array} \right ], \quad \mathbf{f}_{\partial}=\left[ \begin{array}{c}
\frac{\partial\mathbf{f}}{\partial x_1} \\
\vdots \\
 \frac{\partial\mathbf{f}}{\partial x_d}\\\end{array} \right ]
\end{align*}
where for $k=1,...,d$, $\frac{\partial\mathbf{\Phi}}{\partial x_k}\in R^{N\times M}$, $\frac{\partial\mathbf{f}}{\partial x_k}\in R^N$ are defined as following
\begin{align*}
\bigg [\frac{\partial\mathbf{\Phi}}{\partial x_k}\bigg ]_{ij}=\frac{\partial\psi_{j}(\bx)}{\partial x_k}(\bs{z}_i),\quad \bigg [\frac{\partial\mathbf{f}}{\partial x_k}\bigg ]_{i}=\frac{\partial f(\mathbf{\bx})}{\partial x_k}(\bs{z}_i), \quad i=1,...,N, \quad  j=1,...,M.
\end{align*}
Note that now $\tilde{\mathbf{\Phi}}\in \mathbb{R}^{N(d+1)\times M}$, and we refer to this matrix as the gradient-enhanced measurement/design matrix,
with $\tilde{\mathbf{f}}\in \mathbb{R}^{N(d+1)}$ is the data vector.

Notice that compared to the standard $\ell_1$ approach, the gradient enhanced approach (\ref{eq:gradientl1}) involves two additional matrices:
\begin{itemize}
\item  The preconditioning matrix $\mathbf{W}:$ this is designed to enhance recovery properties in $\ell_1$ optimization. Its definition will depend on the type of PCE basis and on how the sample set $\Xi$ is generated. We will discuss this in detail later.
\item The normalizing/weighting matrix  $\mathbf{P}:$ this matrix is included to normalize the design matrix, so that $\widehat{\mathbf{\Phi}}:=\mathbf{W}\tilde{\mathbf{\Phi}}\mathbf{P}$ satisfies mean isotropy.
\end{itemize}
We shall show that the preconditioned matrix $\widehat{\mathbf{\Phi}}:=\mathbf{W}\tilde{\mathbf{\Phi}}\mathbf{P}$  is much more stable in the sense that its MIP (or RIP) constant better behaved than that of the matrix $\tilde{\mathbf{\Phi}}.$  In what follows, we shall give a general guide for choosing these preconditioning matrices.

\subsection{Legendre expansion with Chebyshev sampling}
To illustrate the idea, we begin with Legendre expansion with Chebyshev sampling. I.e., $\rho$ is the uniform measure on $\Gamma = [-1,1]^d$, the PCE basis functions $\psi_j$ are tensor-product Legendre polynomials, and $\Xi$ is constructed via iid sampling from the Chebyshev (arcsine) measure. The use of Chebyshev sampling when approximating with a Legendre polynomial basis (where available data is only function values) has been widely investigated \cite{Rauhutward,XUZHOU,JNZ}, and can produce better results (compared to uniform sampling) when large-degree approximations are required. Here we shall show how inclusion of gradient information can be accomplished in a systematic way. 

Suppose that $\Xi$ is comprised of $N$ iid samples generated from the uniform measure $\rho$. Since the (orthonormal) Legendre polynomials satisfy \eqref{othorgonal}, then we have
\begin{align}
\mathbb{E}\left[\frac{1}{N} \mathbf{\Phi}^T \mathbf{\Phi}\right] = \mathbf{I}.
\end{align}
This is the mean isotropy property. However, if we instead construct $\Xi$ as $N$ iid samples from a different measure, say the Chebyshev measure, then we must introduce a preconditioner to retain the mean isotropy property. Our gradient-enchanced $\ell_1$ minimization strategy aims to maintain mean isotropy when gradient evaluations are included in the measurement matrix.

We recall a standard fact, that derivatives of the univariate Legendre polynomials are orthogonal with respect to the weight function $\eta(x)= (1-x^2)$ \cite{Szego_1959}. By using the above facts we can derive that if $\bz \in \R^M$ is a random variable distributed according to the product Chebyshev weight function,
\begin{align*}
  \rho_c(\bs{x}) = \prod_{j=1}^d \frac{1}{\pi \sqrt{1 - x_j^2}},
\end{align*}
then we have
\begin{align}
  \mathbb{E}^c\left[\frac{2^d}{\rho_c(\bz)} \psi_i(\bz) \psi_j(\bz) +\sum\limits_{k=1}^d \frac{1-z_k^2}{\rho_c(\bs{z})}\frac{\partial \psi_i}{\partial x_k}(\bz)\frac{\partial \psi_j}{\partial x_k}(\bz)\right]=\delta_{ij}\bigg (1+\sum\limits_{k=1}^d c_k i_k(i_k+1)\bigg), \label{eq:legendre_chebyshev}
\end{align}
where $c_k$ is a constant that we make precise later. Here we use $\mathbb{E}^c$ to emphasize that the expectation is taken with respect to the Chebyshev measure.

The above derivation suggests the following choices for the matrices $\mathbf{W}$ and $\mathbf{P}:$
\begin{align}
\mathbf{W}=\left[ \begin{array}{cccc}
\mathbf{W}^0 & & &\\
&\mathbf{W}^1 & &\\
 & & \ddots &\\
&&& \mathbf{W}^d\\\end{array}\right],
\end{align}
where $\mathbf{W}^k$ are diagonal matrices whose entries are defined as
\[
  \mathbf{W}_{n,n}^0=\bigg ((4/\pi^2) (1-(z^{(n)}_j)^2)\bigg )^{d/4},  \quad
  \mathbf{W}^j_{n,n}= \frac{\mathbf{W}_{n,n}^0}{\sqrt{2}} \left(1 - \left(z_j^{(n)}\right)^2\right)^{1/2}, \quad j=1,...,d, \quad n=1,...,N.
\]
Here $z^{(n)}_j$ is the $j$th component of the random vector $\mathbf{z}^{(n)}.$  The normalizing matrix $\mathbf{P}$ is a diagonal matrix with entries $\mathbf{P}_{i,i}=\left(1+\sum\limits_{k=1}^d c_k i_k(i_{k}+1)\right)^{-1/2}$.

With the above definitions, one can easily show that the design matrix is mean isotropy, namely,
\begin{align}
\mathbb{E}^c\left[\frac{1}{N}\widehat{\mathbf{\Phi}}^T \widehat{\mathbf{\Phi}}\right] = \mathbf{I}, \quad \textmd{with}  \quad \widehat{\mathbf{\Phi}}=\mathbf{W}\tilde{\mathbf{\Phi}}\mathbf{P}.
\end{align}
This is the general strategy for our gradient formulation: we take the sampling measure from which $\Xi$ is constructed to be a degree-asymptotica ``good" sampling measure for the PCE basis $\psi_j(\bx)$, we design a preconditioning matrix so that the PCE basis is mean isotropic, and finally we choose a weighting matrix $\bs{P}$ to retain isotropy of the gradient evaluations. Having shown the idea for the special case of Legendre polynomials, we now generalize to arbitrary Jacobi families.

\subsection{General Jacobi expansions with Chebyshev sampling}\label{sec:jacobi}
Now, we turn to the case of General Jacobi expansions with Chebyshev sampling, which includes the Legendre expansion with uniform sampling as a special case. The univariate probability density
\begin{align}\label{eq:jacobi-density}
\rho^{(\alpha, \beta)}(x) &= d^{(\alpha,\beta)}(1-x)^\alpha (1+x)^\beta, & \alpha,\beta &\geq - \frac{1}{2}
\end{align}
is the Beta density function on $[-1,1]$. The normalization coefficient is
\begin{align*}
  d^{(\alpha,\beta)}  =\frac{\Gamma(\alpha + \beta + 2)}{\Gamma(\beta+1) \Gamma (\alpha + 1)2^{\alpha+\beta+1} }.
\end{align*}
Keeping with earlier notation, we use $\rho_c \equiv \rho^{(-1/2, -1/2)}$. Then given
\begin{align*}
  \bs{\alpha} &= \left(\alpha_1, \ldots, \alpha_d\right) \in \left[-\frac{1}{2}, \infty\right)^d, &
  \bs{\beta} &= \left(\beta_1, \ldots, \beta_d\right) \in \left[-\frac{1}{2}, \infty\right)^d,
\end{align*}
we can define the notation for multi-dimensional Jacobi probability densities:
\begin{align*}
  \rho^{(\bs{\alpha}, \bs{\beta})}(\bs{x}) &= \prod_{j=1}^d \rho^{(\alpha_j, \beta_j)}(x_j) \\
\end{align*}
The multivariate PCE basis elements $\psi$ associated to $\rho^{({\bs{\alpha}, \bs{\beta}})}$ is likewise now well-defined, but to avoid notational clutter we will omit showing explicit dependence of $\psi$ and the measurement matrix $\mathbf{\Phi}$ on $\bs{\alpha}$ and $\bs{\beta}$. By using the identity between Jacobi polynomials and their derivatives, we can derive that if $\bs{z}$ is a random variable distributed according to the measure $\rho_c$, then
\begin{align}\label{eq:jacobi_chebyshev}
  \mathbb{E}\left[ \frac{\rho^{(\bs{\alpha},\bs{\beta})}(\bs{z})}{\rho_c(\bs{z})} \psi_i(\bz) \psi_j(\bz) +
  \sum_{k=1}^d \frac{\rho^{(\bs{\alpha}+\bs{e}_k,\bs{\beta}+ \bs{e}_k)}(\bs{z})}{\rho_c(\bs{z})} \frac{\partial \psi_i}{\partial x_k}(\bz)\frac{\partial \psi_j}{\partial x_k}(\bz)\right]=\delta_{ij} \left (1+\sum\limits_{k=1}^d c^2(i_k, \alpha_k, \beta_k) \right),
\end{align}
where $\bs{e}_j \in \R^d$ is the cardinal unit vector in the $j$th direction; i.e., $(e_j)_k = \delta_{j,k}$. We also define $\bs{e}_0 = \bs{0}$ as the zero vector. The normalization constant $c_k$ is
\begin{align*}
  c^2(i_k,\alpha_k, \beta_k) = i_k (i_k+\alpha_k + \beta_k + 1) \frac{(\alpha_k + \beta_k+2)(\alpha_k + \beta_k + 3)}{4 (\alpha_k+1)(\beta_k+1)}.
\end{align*}
The above derivation suggests the following choices for the matrices $\mathbf{W}$ and $\mathbf{P}:$
\begin{align}
\mathbf{W}=\left[ \begin{array}{cccc}
\mathbf{W}^0 & & &\\
&\mathbf{W}^1 & &\\
 & & \ddots &\\
&&& \mathbf{W}^d\\\end{array}\right],
\end{align}
where $\mathbf{W}^k$ are diagonal matrices whose entries are defined as
\begin{align*}
  W^0_{n,n} = \sqrt{\frac{\rho^{(\bs{\alpha},\bs{\beta})}(\bs{z}^{(n)})}{\rho_c(\bs{z}^{(n)})}},  \quad
  W^j_{n,n} = \sqrt{\frac{\rho^{(\bs{\alpha}+\bs{e}_j,\bs{\beta} + \bs{e}_j)}(\bs{z}^{(n)})}{\rho_c(\bs{z}^{(n)})}}
\end{align*}
for $n=1, \ldots, N$, and $j = 1, \ldots d$. The normalizing matrix $\mathbf{P}$ is a diagonal matrix with entries
\begin{align}\label{eq:jacobi-normalizing}
  \mathbf{P}_{i,i}= \left (1+\sum\limits_{k=1}^d c^2(i_k, \alpha_k, \beta_k) \right)^{-1/2}.
\end{align}

With the above definitions, one can, just as for the Legendre case, show that the whole design matrix is mean isotropy, i.e.,
\begin{align}
\mathbb{E}\left[\frac{1}{N}\widehat{\mathbf{\Phi}}^T \widehat{\mathbf{\Phi}}\right] = \mathbf{I}, \quad \textmd{with}  \quad \widehat{\mathbf{\Phi}}=\mathbf{W}\tilde{\mathbf{\Phi}}\mathbf{P}.
\end{align}

For this gradient enhanced approach, we are interested in understanding inclusion of derivative information can improve the recovery ability. We shall provide one answer to this question in the following theorem by analyzing the coherence parameter of the design matrix. To this end, we define the coherence parameter of the original compressed sensing approach as
\begin{equation*}
\mu_L(\mathbf{\Phi}) := \sup_{i, \,\mathbf{z}\in \Xi} |\mathbf{\Phi}_i(\bs{z})|^2_2.
\end{equation*}
We have that $|\mathbf{\Phi}_i(\bs{z})|_2$ is the norm of one column in the design matrix $\mathbf{\Phi}$. The parameter $\mu_L$ provides a quantitative recovery quality metric for compressed sensing approaches \cite{E.J.candes2010Ap,J.Hampton2015Cs}. Smaller parameter values result in better recovery properties.

Similarly, following the notation in \cite{Peng_2016gradient}, we define the corresponding parameter of the gradient enhanced approach as
\begin{equation*}
\beta_L\big(\mathbf{\widehat{\Phi}}\big) := \sup_{i, \,\mathbf{z}\in \Xi} \left\| \bs{\widehat{\Phi}}_{i}(\bs{z}) \right\|^2,
\end{equation*}
where
\begin{align*}
  \bs{\widehat{\Phi}}_{i}(\bs{z}) = \frac{1}{\mathbf{P}_{i,i}} \left( \begin{array}{c} \sqrt{\frac{\rho^{(\bs{\alpha},\bs{\beta})}(\bs{z})}{\rho_c(\bs{z})}} \Phi_{i}(\bs{z}) \\
  \sqrt{\frac{\rho^{(\bs{\alpha}+\bs{e}_1,\bs{\beta}+\bs{e}_1)}(\bs{z})}{\rho_c(\bs{z})}} \ppx{x_1} \Phi_{i}(\bs{z}) \\
  \sqrt{\frac{\rho^{(\bs{\alpha}+\bs{e}_2,\bs{\beta}+\bs{e}_2)}(\bs{z})}{\rho_c(\bs{z})}} \ppx{x_2} \Phi_{i}(\bs{z}) \\
  \cdots \\
  \sqrt{\frac{\rho^{(\bs{\alpha}+\bs{e}_d,\bs{\beta}+\bs{e}_d)}(\bs{z})}{\rho_c(\bs{z})}} \ppx{x_d} \Phi_{i}(\bs{z})
  \end{array}\right).
\end{align*}

In the following, we present the main theorem of this paper, which shows the bound for the coherence parameters $\mu_L$ and $\beta_L$.
\begin{theorem}\label{th:main}
Recall that $\mathbf{\Phi}$ and $\widehat{\mathbf{\Phi}}$ are design matrices for the standard $\ell_1$ and the gradient enhanced $\ell_1$ approach via Jacobi expansions with Chebyshev sampling, respectively. Then the two coherence parameters satisfy the following estimates:
\begin{align}\label{eq:coherence-bound}
\mu_L\left(\bs{\Phi}\right) &\leq \prod_{j=1}^d 2 e \left( 2 + \sqrt{\alpha_j^2 + \beta_j^2} \right) \\
\label{eq:d-coherence-bound}
\beta_L\left(\bs{\widehat{\Phi}}\right) &\leq C\prod_{j=1}^d 2 e \left( 2 + \sqrt{\alpha_j^2 + \beta_j^2} \right)
\end{align}
where $1 \leq C \leq 1 + \frac{\sqrt{2}}{2}$. The lower bound for $C$ is achieved when $\alpha_k = \beta_k = -\frac{1}{2}$ and the upper bound occurs when there is a $k$ such that $\alpha_k = \beta_k = 0$. If $\mathcal{N}(\cdot)$ represents the nullspace of a matrix, then
$$\mathcal{N}\big(\mathbf{\widehat{\Phi}}\big) \subset \mathcal{N}\big(\mathbf{\Phi}\big),$$
and this is almost-surely a strict subset when $\mathbf{\Phi}$ is under-sampled.
\end{theorem}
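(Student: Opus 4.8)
The plan is to split Theorem~\ref{th:main} into two largely independent parts: the coherence estimates \eqref{eq:coherence-bound}--\eqref{eq:d-coherence-bound}, and the structural nullspace claim. For the coherence estimates, the key observation is that the densities $\rho^{(\bs{\alpha},\bs{\beta})}$, $\rho^{(\bs{\alpha}+\bs{e}_k,\bs{\beta}+\bs{e}_k)}$, $\rho_c$, the basis $\psi_i=\prod_{j=1}^d\varphi^j_{i_j}$ and its partials all factor over coordinates, so both $\mu_L$ and $\beta_L$ reduce to one-dimensional suprema. Writing $p_n^{(\alpha,\beta)}$ for the degree-$n$ orthonormal polynomial associated with the univariate Beta density $\rho^{(\alpha,\beta)}$ of \eqref{eq:jacobi-density} (so that $\varphi^j_{i_j}=p_{i_j}^{(\alpha_j,\beta_j)}$ and $\rho_c=\rho^{(-1/2,-1/2)}$), the relevant single-factor quantity is $a(\alpha,\beta):=\sup_{x\in[-1,1],\,n\ge0}\frac{\rho^{(\alpha,\beta)}(x)}{\rho_c(x)}\,p_n^{(\alpha,\beta)}(x)^2$. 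Since $\mu_L(\mathbf{\Phi})=\sup_{i,\,\bs{z}\in\Xi}\frac{\rho^{(\bs{\alpha},\bs{\beta})}(\bs{z})}{\rho_c(\bs{z})}\psi_i(\bs{z})^2\le\prod_{j=1}^d a(\alpha_j,\beta_j)$ (enlarging the sup to all of $[-1,1]^d$ and all degrees), \eqref{eq:coherence-bound} reduces to the one-dimensional inequality
\[
a(\alpha,\beta)=\pi\,d^{(\alpha,\beta)}\sup_{x,n}(1-x)^{\alpha+\frac12}(1+x)^{\beta+\frac12}p_n^{(\alpha,\beta)}(x)^2\ \le\ 2e\big(2+\sqrt{\alpha^2+\beta^2}\big),
\]
a degree-uniform bound on weighted orthonormal Jacobi polynomials (with the explicit constant absorbing a Stirling-type estimate of the normalizer $d^{(\alpha,\beta)}$). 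Establishing this sharp one-dimensional estimate is the analytic heart of the theorem and the step I expect to be the main obstacle; everything else is bookkeeping on top of it.

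For the gradient matrix I would use the Jacobi ladder identity $\big(\tfrac{d}{dx}p_n^{(\alpha,\beta)}(x)\big)^2=c^2(n,\alpha,\beta)\,p_{n-1}^{(\alpha+1,\beta+1)}(x)^2$, in which $c^2(n,\alpha,\beta)$ is precisely the constant appearing in the normalizer $\mathbf{P}$ of \eqref{eq:jacobi-normalizing} (this is also why the mean-isotropy identity \eqref{eq:jacobi_chebyshev} has its stated right-hand side). Substituting it into $\|\widehat{\mathbf{\Phi}}_i(\bs{z})\|^2=\mathbf{P}_{i,i}^2\big(\prod_jA_j+\sum_{k=1}^dB_k\prod_{j\ne k}A_j\big)$, with $A_j=\frac{\rho^{(\alpha_j,\beta_j)}(z_j)}{\rho_c(z_j)}p_{i_j}^{(\alpha_j,\beta_j)}(z_j)^2$ and $B_k=c^2(i_k,\alpha_k,\beta_k)\frac{\rho^{(\alpha_k+1,\beta_k+1)}(z_k)}{\rho_c(z_k)}p_{i_k-1}^{(\alpha_k+1,\beta_k+1)}(z_k)^2$, then bounding each $A_j$ and the weighted-polynomial part of each $B_k$ by the \emph{same} one-dimensional bound above (at the shifted parameters $(\alpha_k+1,\beta_k+1)$ for $B_k$), and finally using $\mathbf{P}_{i,i}^{-2}=1+\sum_k c^2(i_k,\alpha_k,\beta_k)$ together with the elementary inequality $\frac{1+\sum_k t_k r_k}{1+\sum_k t_k}\le\max_k\max(1,r_k)$ for $t_k\ge0$, yields \eqref{eq:d-coherence-bound} with
\[
C=\max_{1\le k\le d}\ \frac{2+\sqrt{(\alpha_k+1)^2+(\beta_k+1)^2}}{2+\sqrt{\alpha_k^2+\beta_k^2}}.
\]
The stated range of $C$ is then a one-line computation: $(\alpha_k+1)^2+(\beta_k+1)^2\ge\alpha_k^2+\beta_k^2$ holds exactly when $\alpha_k+\beta_k\ge-1$ (always true, with equality iff $\alpha_k=\beta_k=-\tfrac12$), giving $C\ge1$; and the triangle inequality $\sqrt{(\alpha_k+1)^2+(\beta_k+1)^2}\le\sqrt{\alpha_k^2+\beta_k^2}+\sqrt2$ gives $C\le1+\frac{\sqrt2}{2+\sqrt{\alpha_k^2+\beta_k^2}}\le1+\frac{\sqrt2}{2}$, with equality iff some $\alpha_k=\beta_k=0$. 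This reproduces the extremal cases in the statement.

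For the nullspace inclusion: $\mathbf{W}$ and $\mathbf{P}$ are diagonal, $\mathbf{P}$ has strictly positive diagonal by \eqref{eq:jacobi-normalizing}, and the diagonal entries of $\mathbf{W}$ are square roots of ratios $\rho^{(\cdot,\cdot)}(\bs{z}^{(n)})/\rho_c(\bs{z}^{(n)})$ which, for $\alpha,\beta\ge-\tfrac12$, are bounded on $[-1,1]^d$ and strictly positive on the open cube $(-1,1)^d$ — an event of probability one under Chebyshev sampling. Hence $\mathbf{W},\mathbf{P}$ are invertible almost surely, so $\mathcal{N}(\widehat{\mathbf{\Phi}})=\mathcal{N}(\mathbf{W}\tilde{\mathbf{\Phi}}\mathbf{P})=\mathcal{N}(\tilde{\mathbf{\Phi}}\mathbf{P})$; since $\tilde{\mathbf{\Phi}}$ is $\mathbf{\Phi}$ stacked over $\mathbf{\Phi}_\partial$, this is $\mathcal{N}(\mathbf{\Phi}\mathbf{P})\cap\mathcal{N}(\mathbf{\Phi}_\partial\mathbf{P})$, and deleting the $dN$ gradient rows can only enlarge the solution set, so $\mathcal{N}(\widehat{\mathbf{\Phi}})\subseteq\mathcal{N}(\mathbf{\Phi})$: the enhanced design matrix imposes every interpolation constraint of the standard one, plus the gradient constraints.

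Finally, strictness. ``Under-sampled'' means $N<M$, so $\mathrm{rank}(\mathbf{\Phi})\le N<M$ and $\mathcal{N}(\mathbf{\Phi})\ne\{0\}$; since $\dim\mathcal{N}(\widehat{\mathbf{\Phi}})=M-\mathrm{rank}(\widehat{\mathbf{\Phi}})$, strict containment is equivalent to $\mathrm{rank}(\widehat{\mathbf{\Phi}})\ge N+1$. The entries of $\widehat{\mathbf{\Phi}}$ are polynomials in the coordinates of $\Xi$, so $\{\mathrm{rank}(\widehat{\mathbf{\Phi}})\le N\}$ is contained in the zero set of a nonzero polynomial — hence Lebesgue-null, hence null under Chebyshev sampling — \emph{provided} $\mathrm{rank}(\widehat{\mathbf{\Phi}})\ge N+1$ for some configuration. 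To exhibit one: fix $\bs{z}^{(2)},\dots,\bs{z}^{(N)}$ in general position so their $N-1$ evaluation functionals on $T_n^d$ are independent, and set $V=\{f\in T_n^d:f(\bs{z}^{(m)})=0,\ m\ge2\}$, so $\dim V=M-(N-1)\ge2$. If for every $\bs{z}$ the map $V\to\R^{d+1}$, $f\mapsto(f(\bs{z}),\nabla f(\bs{z}))$, had rank $\le1$, then any two linearly independent $f_1,f_2\in V$ would have $(f_1,\nabla f_1)$ and $(f_2,\nabla f_2)$ pointwise proportional, and on the open set $\{f_1\ne0\}$ the quotient rule would force $\nabla(f_2/f_1)\equiv0$, making $f_2$ a scalar multiple of $f_1$ — a contradiction. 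So some $\bs{z}^{(1)}$ makes that map rank $\ge2$, and then for $\Xi=\{\bs{z}^{(1)},\dots,\bs{z}^{(N)}\}$ the value row plus two gradient rows at $\bs{z}^{(1)}$, together with the $N-1$ value rows at the remaining points, give $\mathrm{rank}(\widehat{\mathbf{\Phi}})\ge N+1$. This is the only configuration-dependent input, and the almost-sure statement follows; this last part is routine compared with the one-dimensional Jacobi estimate.
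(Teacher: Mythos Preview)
Your approach is essentially the same as the paper's: tensorize, invoke the one-dimensional weighted Jacobi bound, use the ladder identity \eqref{eq:jacobi-derivative}, and bound the resulting convex combination to obtain the same formula
\[
C=\max_{1\le k\le d}\frac{2+\sqrt{(\alpha_k+1)^2+(\beta_k+1)^2}}{2+\sqrt{\alpha_k^2+\beta_k^2}}.
\]
Two remarks on scope. First, the one-dimensional estimate you flagged as ``the analytic heart'' and ``main obstacle'' is not proved in the paper either: it is quoted as Lemma~\ref{eq:jacobi-bound-le} from the literature, so you need not establish it from scratch. Second, your treatment of the nullspace part is considerably more careful than the paper's, which handles the inclusion in one line (observe that, up to the invertible diagonal factors $\mathbf{W}$ and $\mathbf{P}$, the matrix $\mathbf{\Phi}$ is a sub-block of $\widehat{\mathbf{\Phi}}$) and defers the almost-sure strictness entirely to a citation; your rank-increase argument via the quotient rule is a nice self-contained justification, though you should note that $\{f_1\neq 0\}$ need not be connected and close the loop by invoking polynomial identity on an open subset. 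Your analysis of the range of $C$ also fills in a step the paper leaves implicit.
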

\begin{proof}
See Appendix A2.
\end{proof}
We remark that ideally we could show the gradient approach admits an improved (smaller) parameter $\beta_L$, i.e. $\beta_L\left( \bs{\widehat{\Phi}} \right) \leq \mu_L\left( \bs{\Phi} \right)$, yielding a better recovery property. Our analysis does not bear this fruit, but we have shown that (i) the coherence for both $\bs{\Phi}$ and $\bs{\widehat{\Phi}}$ is a constant raised to the $d$th power, independent of polynomial degree; and (ii) the constant $C$ in the estimate \eqref{eq:d-coherence-bound} is dimension-independent and relatively small.

\subsection{Hermite expansions with Gaussian sampling}
In the last two sections, we presented two examples in bounded domain. Here we present a unbounded case, where the basis elements are Hermite polynomials and the samples are chosen according to the Gaussian measure. The authors in \cite{Peng_2016gradient} notice that the gradient of the Hermite basis elements are orthogonal with respect to the same Gaussian measure. The authors show that if $\psi_j$ are suitably normalized Hermite polynomials and $\bs{z}$ is a multivariate standard normal random variable, then
\begin{align}
\label{hermite-l2norm}
\mathbb{E}\bigg (\psi_i(\bz)\psi_j(\bz)+\sum\limits_{k=1}^d\frac{\partial \psi_i}{\partial x_k}(\bz)\frac{\partial \psi_j}{\partial x_k}(\bz)\bigg)=\delta_{ij}\bigg (1+\sum\limits_{k=1}^di_k\bigg ).
\end{align}
This motivates the following choice of normalizing matrix:
$$\mathbf{P}^h= \textmd{diag}(\mathbf{P}_{1,1}, ... , \mathbf{P}_{N,N}), \quad \mathbf{P}_{i,i}=\bigg (1+\sum\limits_{k=1}^di_k\bigg )^{-1/2}, \quad i=1, ... , N. $$
The preconditioning matrix $\bs{W}$ would be set to the identity in this case. One main result of \cite{Peng_2016gradient} then shows a similar result as in Theorem \ref{th:main}.

We also remark that extensions to general unbounded problems (e.g., Laguerre expansions) would use similar techniques as above. We note that there are more sophisticated sampling strategies one can use in the unbounded case \cite{Narayan_2016Christoffel,JNZ} so that the choice of $\bs{W} = \bs{I}$ is not necessarily optimal.

Finally, we make some remarks about the weighting matrices $\bs{P}$ that we have constructed. Our choice of this matrix for the Hermite case above, and for the general Jacobi case in \eqref{eq:jacobi-normalizing} have been diagonal matrices due to the orthogonality property of \textit{derivatives} of orthogonal polynomials. In fact, the only univariate polynomial families whose derivatives are also sets orthogonal polynomials are the Jacobi, Laguerre, and Hermite polynomials \cite{hahn_uber_1935,webster_orthogonal_1938,krall_derivatives_1936}. Therefore, if a PCE basis associated to a non-classical polynomial family is used, then the choice of $\bs{P}$ will not be diagonal: instead it will be any inverse square root of the Gramian associated to the polynomial derivatives.

\section{Further discussions}

In the last section, we have present a general framework to include the gradient information in the compressed sensing approach. Notice that in our approach, the gradient information is included directly for each direction (variable). However, one may consider different ways to include those information.  For instance, partial gradient measurements, e.g., an incomplete set of directional derivatives, may be provided. We may therefore consider the following problem:

\begin{itemize}
\item Find a sparse expansion of $f(\bx)$ with
\begin{align}
  f(\bs{z}^{(j)})\,\,&=\,\,f_j, \qquad\qquad\qquad\qquad\,\, \bs{z}^{(j)}\in \Xi, \label{eq:value1}\\
  D_{\vv_t}f(\bs{z}^{(j)})\,\,&=\,\,f'_{j,t},\qquad t=1,\ldots,k, \,\,\,\bs{z}^{(j)}\in \Xi, \label{eq:value2}
\end{align}
where  $D_{\vv_t}f(\bs{z}_j):=\innerp{\nabla f(\vx),\vv_t}|_{\vx=\bs{z}_j}$ and $\vv_t\in \R^d$ are directional vectors. Namely, we assume that both function values and the directional derivative information at the sampling points are known.
\end{itemize}
The above approach can be viewed as a generalization of the approach in the last section. Here, we have more flexibility to choose the directions $\{\vv_j\}_j,$ and it is expected that a smart choice of $\{\vv_j\}_j$ may lead to a improved recovery results. However this approach might not be of practical value, as there is no evidence to show how to get such directional derivatives. Nevertheless, this can be viewed as an interesting mathematical problem, as discussed in \cite{xu_zhou_2018}.

Besides the above approach, one may also interested in the following mathematical problem:
\begin{itemize}
\item Find a sparse approximation of $f(\bx)$ with
\begin{equation}\label{eq:hi}
D_{\vv_j}^{\tau_j}f(\bs{z}_j)\,\,=\,\, y_j, \quad \bs{z}_j\in \Xi, \quad j=1,\ldots,N,
\end{equation}
where $\vv_j\in \R^d$ are directional vectors, and $\tau_j\in \N_0$ are non-negative integers.
\end{itemize}
Here, it is supposed that one knows either the $\tau_j$-order directional derivative of $f$ at $\bs{z}^{(j)}$ or the function value $f(\bs{z}^{(j)})$. If $\tau_j=0$, then (\ref{eq:hi}) means that we know only the function value of $f$ at $\bs{z}^{(j)}$, i.e., $y_j=f(\bs{z}^{(j)})$. Notice that a main feature of this approach is that the locations (samples) for evaluating the function values and the gradient information are independent, while normally one assumes that function values and the gradient information are evaluated in the same locations (which is more practical).

Finally, we would like to remark that for the gradient-enhanced approach it seems that the precondition matrix is the key for the recovery property. We believe that such matrices presented here is not optimal, and one may consider alternative choices, e.g., the Christoffel weighted approach in \cite{Narayan_2016Christoffel,JNZ} that is optimal for degree-asymptotic approximations.

\section{Numerical examples}

We now provide some numerical examples to show the performance of the gradient-enhanced $\ell_1$-minimization approach.  For the implementation of the $\ell_1$ minimization, we employ the available tools such
as Spectral Projected Gradient algorithm (SPGL1) from \cite{Vanden} that was implemented in the MATLAB package SPGL1 \cite{vanFrie2}. To compare the standard and gradient-enhanced $\ell$- minimization solutions, we will use \textit{standard} to denote the numerical results by using the standard $\ell_1$-minimization, while we shall denote by \textit{gradient-enhanced} the numerical results obtained by using gradient enhanced $\ell_1$ approach. We shall also use \textit{standard-double} to denote the standard approach with "doubled" function values.  More precisely, consider for example a two dimensional example, suppose we have $N$ function values and $2N$ gradient values (with respect to each variable). Then, the full gradient enhanced approach will use $3N$ information ($100\%$ information, i.e., $N$ function values and $2N$ gradient values). A $50\%$ gradient enhanced approach would involve $N$ function values and $N$ gradient information (with respect to a randomly chosen direction/variable). While the \textit{standard-double} will stands for the standard approach with $3N$ function values.

\subsection{Stability tests}

We first show some stability tests between the preconditioned matrix $\widehat{\mathbf{\Phi}}=\mathbf{W}\tilde{\mathbf{\Phi}}\mathbf{P}$ and the original matrix $\tilde{\mathbf{\Phi}}.$ This is done by showing the MIP constant in equation (\ref{eq:MIC}),  which is a key index for stable sparse recovery. Notice that the smaller the MIC constant is, the better the recovery guarantee. We consider the Legendre expansion with Chebyshev sampling. For a fixed polynomial space, we show in Fig.\ref{fig:legendre_MIC_via_samples} the MIP constants of $\widehat{\mathbf{\Phi}}$ and $\tilde{\mathbf{\Phi}}$ with respect to the number of samples. While Fig.\ref{fig:legendre_MIC_via_number of PCE} presents the MIP constants, for a fixed number of samples, with respect to the number of expansion terms $M.$ In both cases, we also present the MIP constant of the matrix $\mathbf{\Phi}$ where no derivative information is included. It is clear shown that the preconditioned matrix $\widehat{\mathbf{\Phi}}$ admits a much well behaved MIC constant (see the purple-triangular lines). While it is also shown that the direct inclusion of derivative information (the matrix $\tilde{\mathbf{\Phi}}$) can actually destroy the stability of the matrix $\mathbf{\Phi}$ (see the blue lines and red lines).
\begin{figure}[!ht]
\centering
\includegraphics[width=0.48\textwidth,height=0.24\textheight]{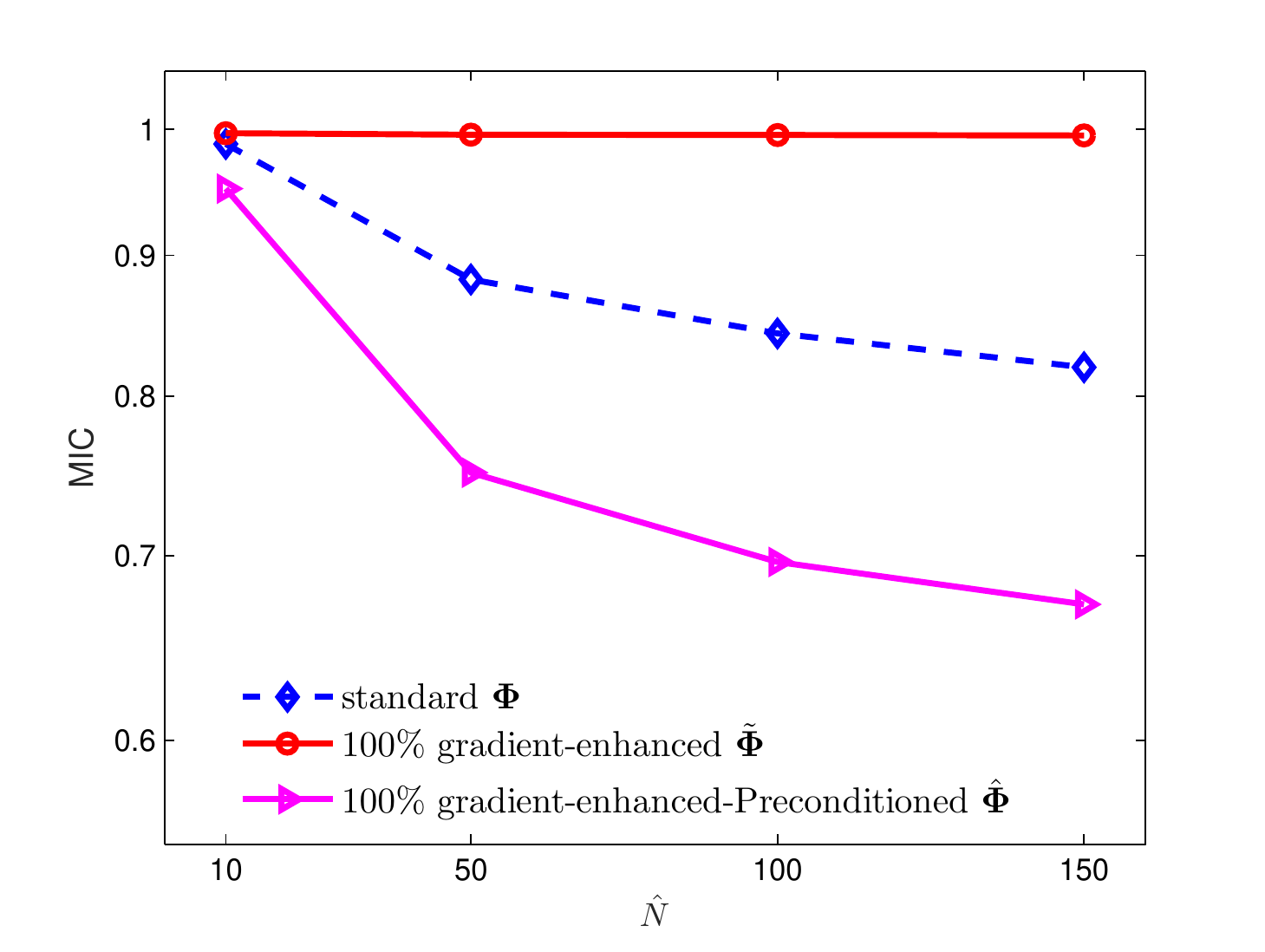}\quad
\includegraphics[width=0.48\textwidth,height=0.24\textheight]{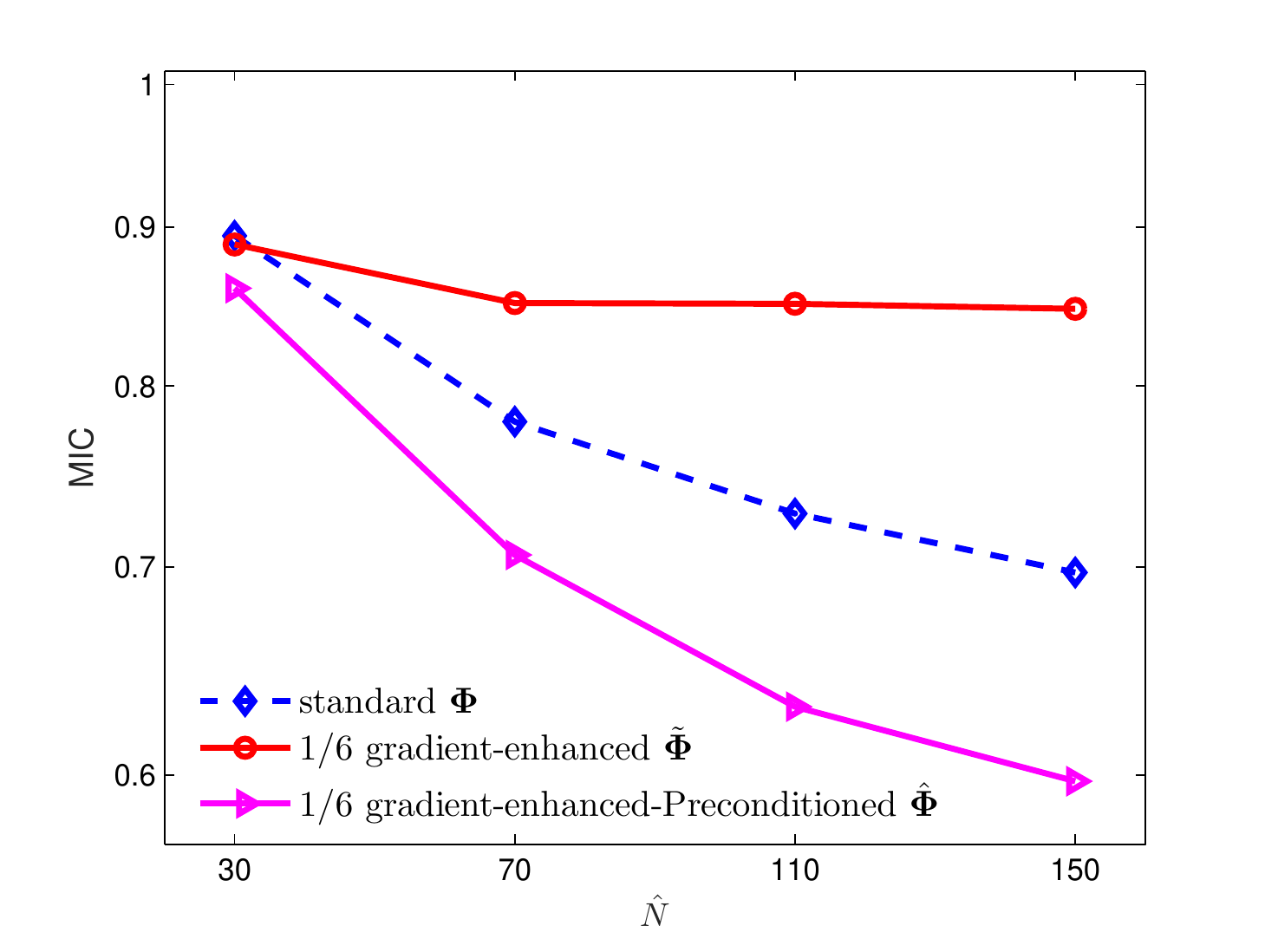}
\caption{The MIP constant for three matrices against the number of samples: $\widehat{\mathbf{\Phi}}$, $\tilde{\mathbf{\Phi}},$ and $ \mathbf{\Phi}$.
Left: $d=2$, $n=30$. Right: $d=6$, $n=5$. Legendre polynomial and Chebyshev samples.}\label{fig:legendre_MIC_via_samples}
\end{figure}

\begin{figure}[!ht]
\centering
\includegraphics[width=0.48\textwidth,height=0.24\textheight]{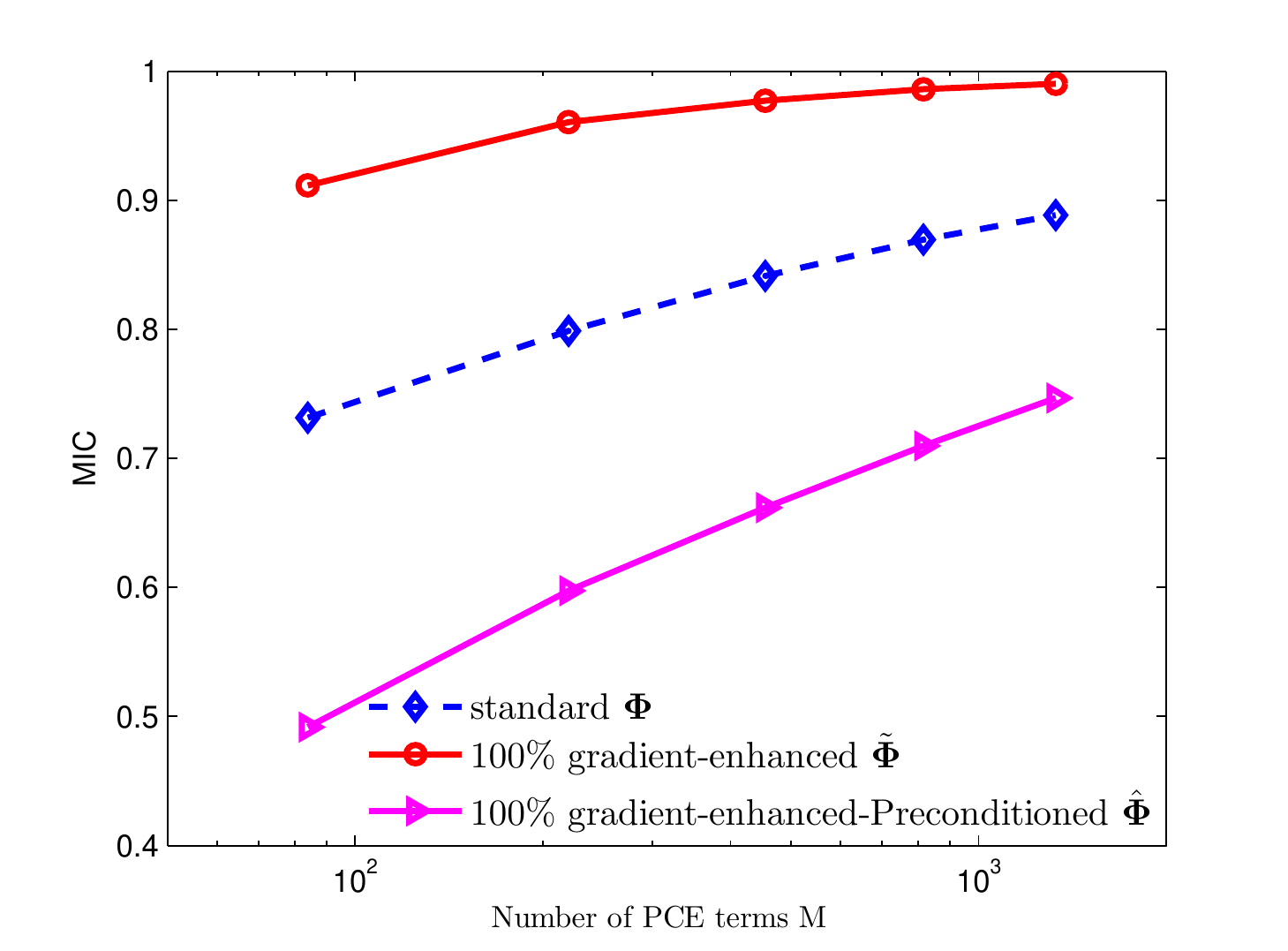}\quad
\includegraphics[width=0.48\textwidth,height=0.24\textheight]{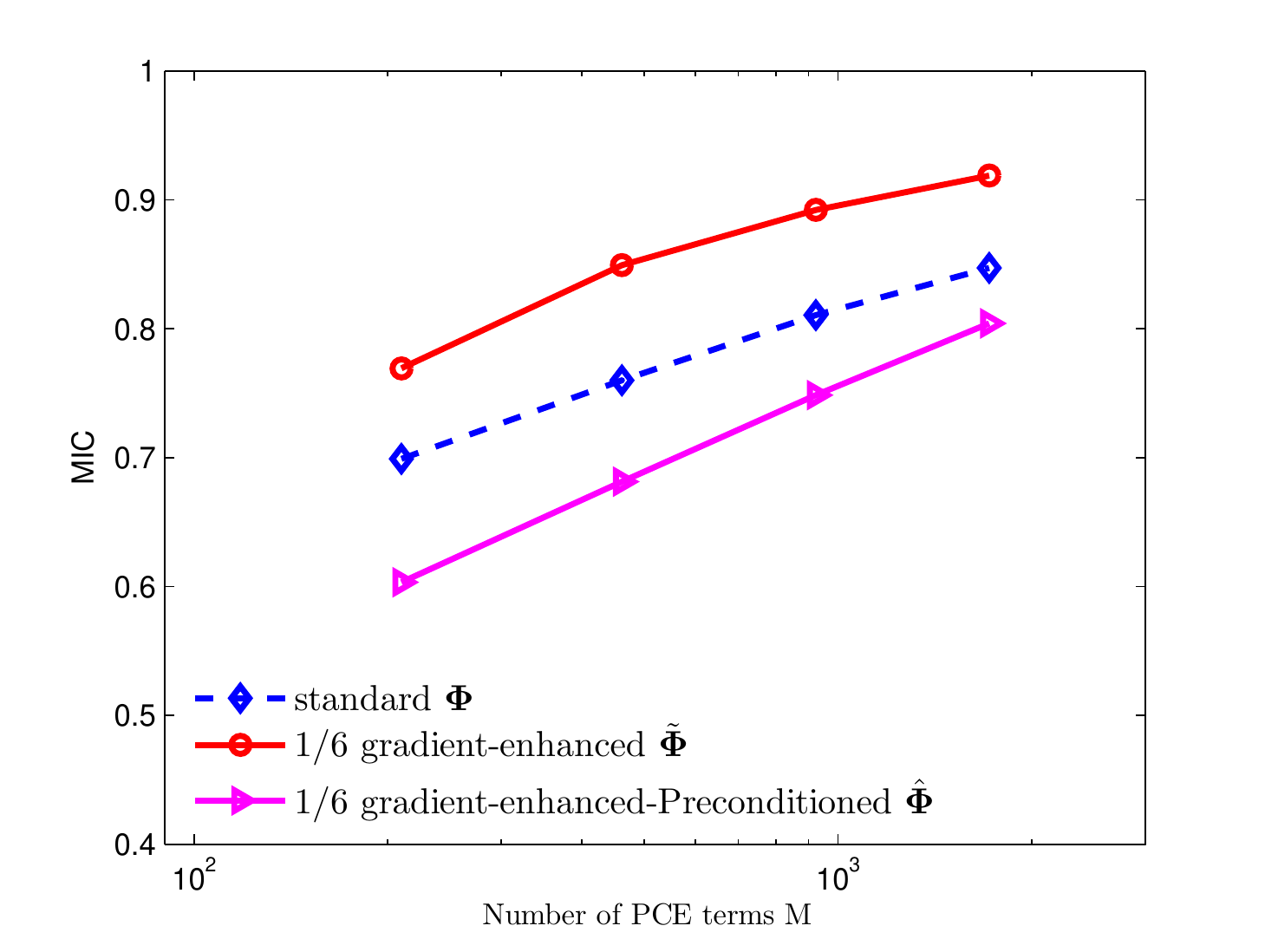}
\caption{The MIP constant for three matrices against the number of PCE terms with fixed number of samples: $\widehat{\mathbf{\Phi}}$, $\tilde{\mathbf{\Phi}},$ and $ \mathbf{\Phi}$.
Left: $d=2$, $N=80$. Right: $d=6$, $N=80$. Legendre polynomial and Chebyshev samples.}\label{fig:legendre_MIC_via_number of PCE}
\end{figure}

\subsection{Benchmark Test: fixed sparsity}
In this section, We assume that the target (exact) function has a sparse polynomial expansion, i.e. $f(\bx)=\sum _{j=1}^M c_j \psi_j(\bx)$ with $\|\mathbf{c}\|_{0}=s$, and attempt to recover this vector. In all our tests, we assume the random input is uniform distributed, and the samples are chosen randomly with the chebyshev measure. Notice that numerical examples for the Hermite expansion can be found in \cite{Peng_2016gradient}.

For a given sparsity level $s$, we shall fix $s$ coefficients of the polynomial while keeping the rest of the coefficients zero. The values of the $s$ non-zero coefficients are drawn as $i.i.d.$ samples from a standard normal distribution. We approximate the PCE coefficients $\mathbf{c}$ via the gradient enhanced approach
from these generated data. We examine the frequency of successful recoveries. This is accomplished by $100$ trials of the algorithms and counting the successful ones. A recovery is considered successful when the resulting coefficient vector $\mathbf{c}$ satisfies $\|\mathbf{c}-\mathbf{\tilde{c}}\|_{\infty}\leq{10^{-3}}.$

We consider the two dimensional case first. In Figure.\ref{fig:legendre2d_gradient} (Left), we show the recovery probability against number of sample points $N$ with a fixed sparsity $s=8$. To have a better understanding, in the right plot of Figure.\ref{fig:legendre2d_gradient}, we also present recovery probability with respect to sparsity $s$ with a fixed number of random samples $N=35$. Both the two plots show that the use of gradient information can indeed improve the recovery rate, and furthermore, the more gradient information is included, the better recovery results obtained.

\begin{figure}[!ht]
\centering
\includegraphics[width=0.48\textwidth,height=0.24\textheight]{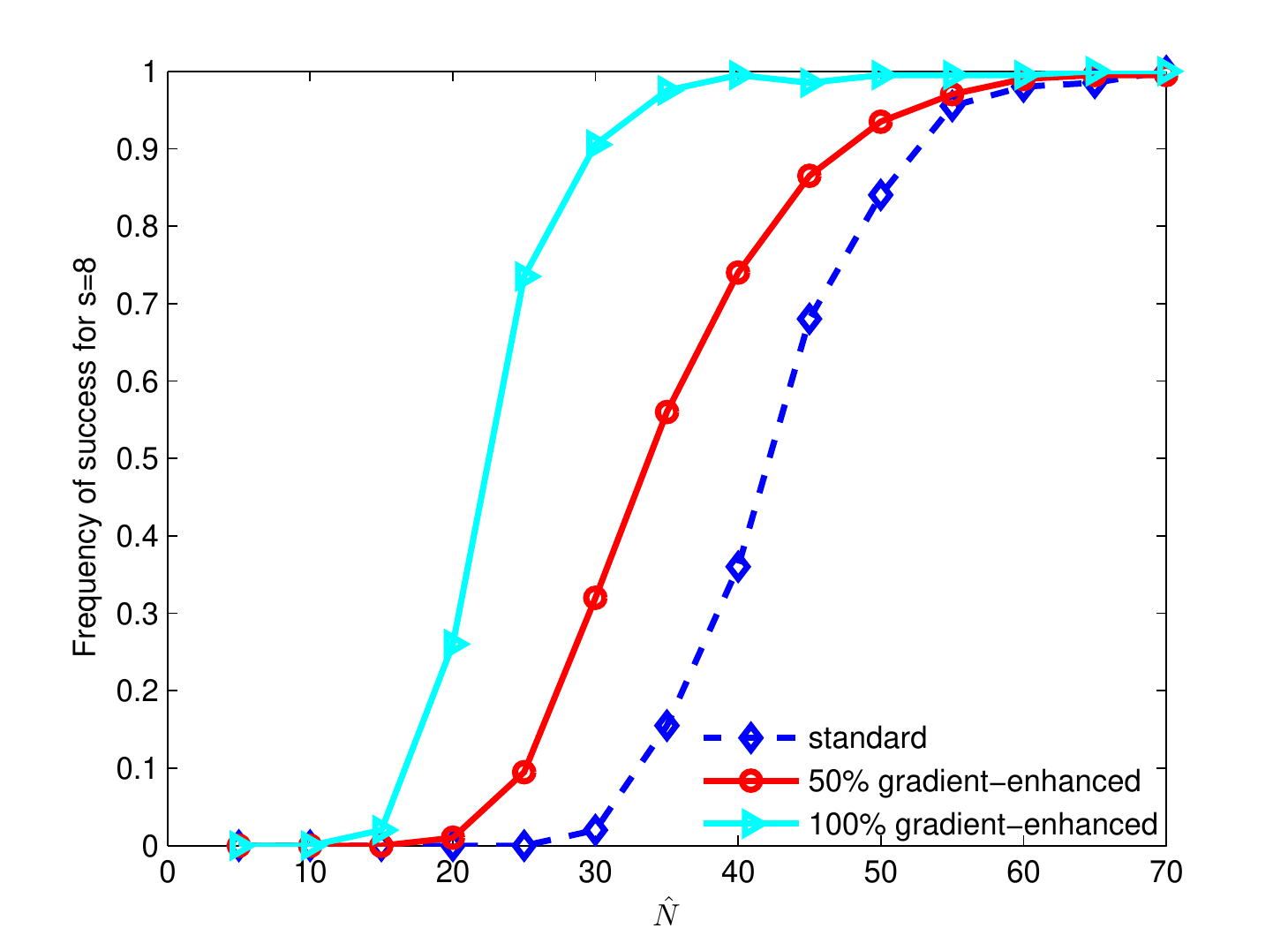}\quad
\includegraphics[width=0.48\textwidth,height=0.24\textheight]{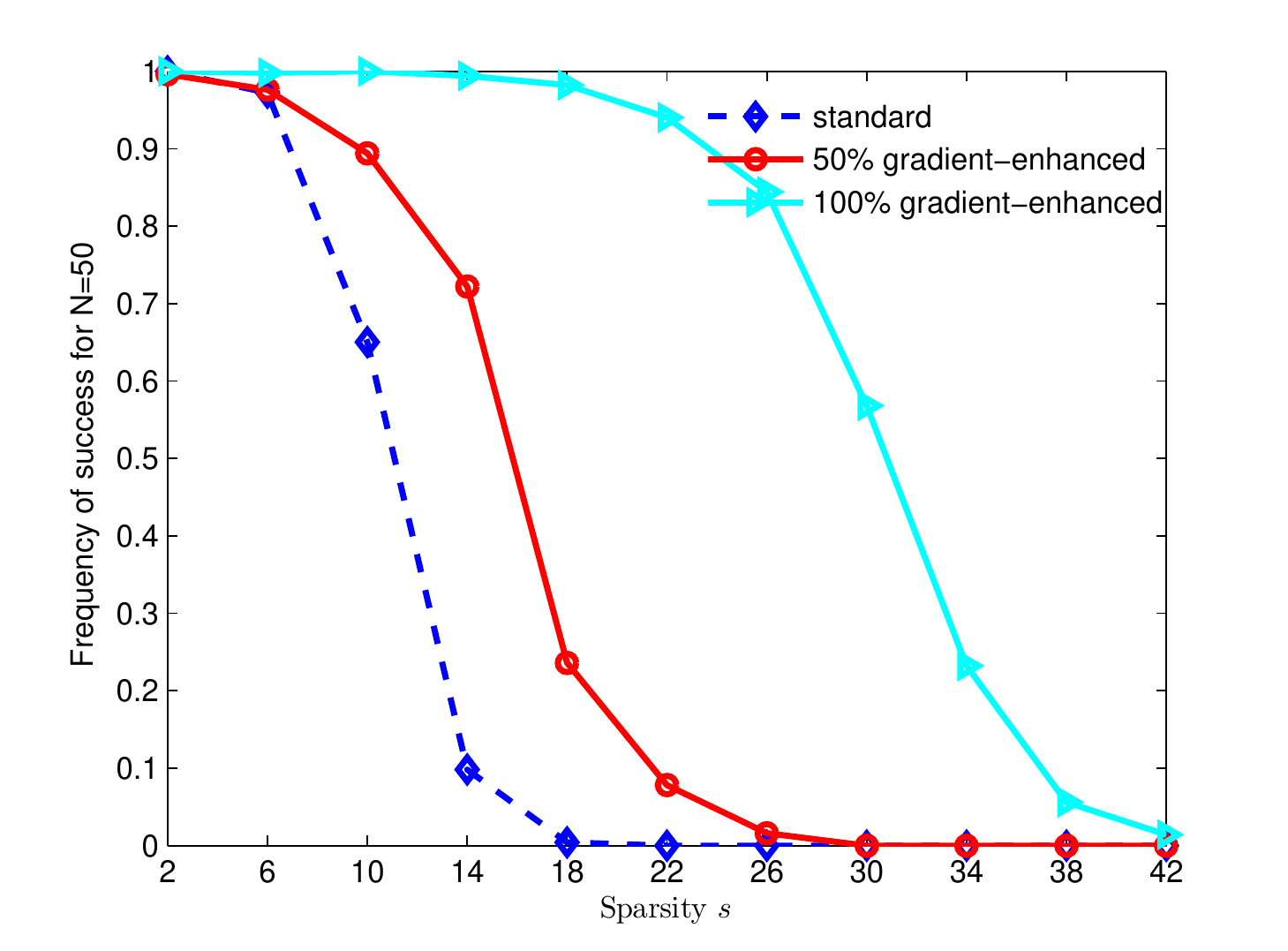}
\caption{Left: Recovery probability against number of samples, $s = 8$; Right: Recovery
probability against sparsity, $N = 50$. Two dimensional tests with $d = 2, n = 20$. Legendre polynomial and Chebyshev samples.}\label{fig:legendre2d_gradient}
\end{figure}

We now consider the 10-dimensional case. In Figure.\ref{fig:legendre10d_gradient} (Left), we show the recovery probability against number of sample points $N$ with a fixed sparsity $s=6$ and the right plot, we present the recovery probability with respect to sparsity with a fixed number of points $N=50$. In this example, we test the $10\%$ and $20\%$ gradient-enhanced approach, meaning that only one or two partial derivatives are involved in the $\ell_1$ minimization. Once again, better performance can be observed when gradient information is included.

\begin{figure}[!ht]
\centering
\includegraphics[width=0.48\textwidth,height=0.24\textheight]{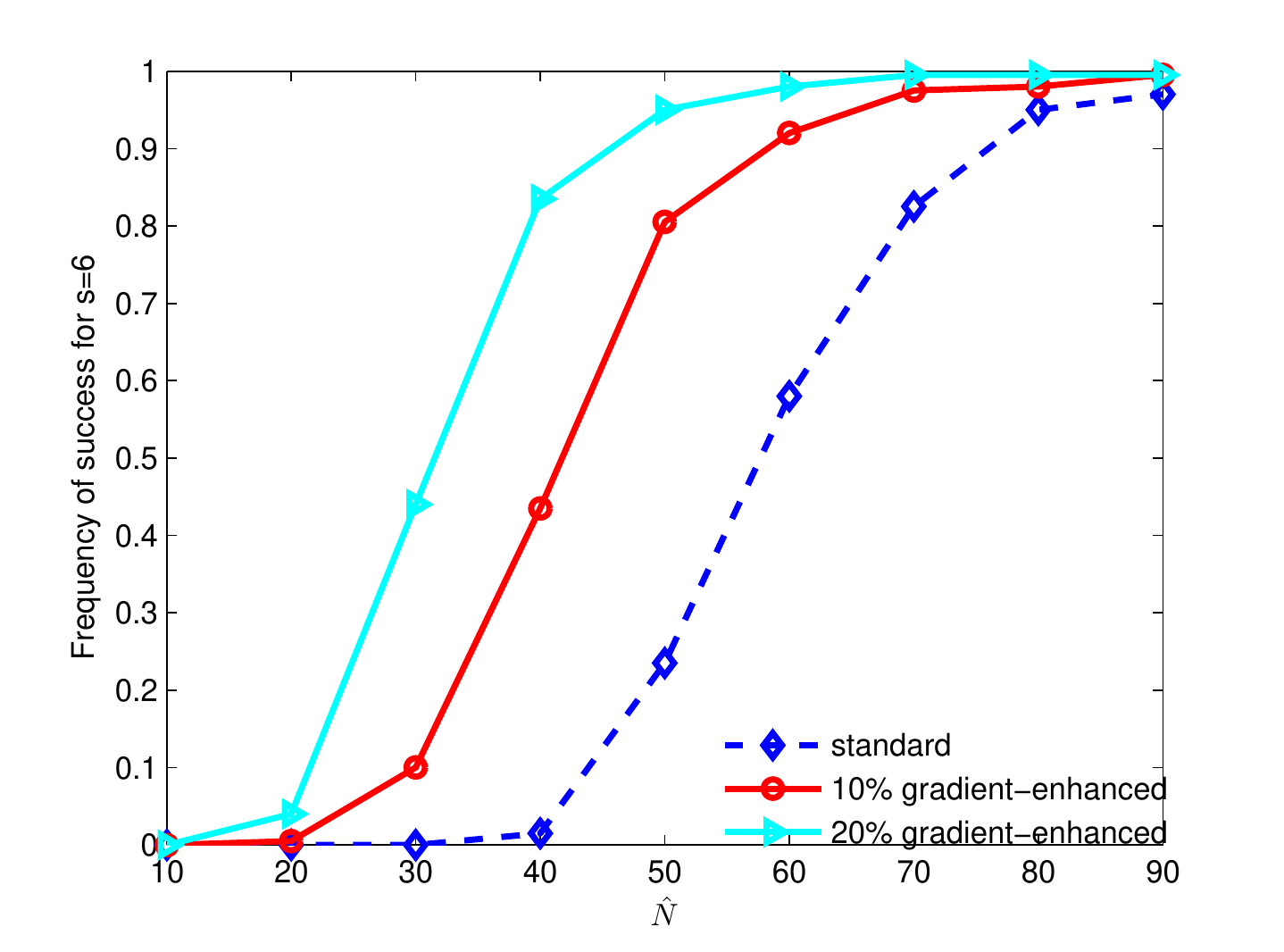}\quad
\includegraphics[width=0.48\textwidth,height=0.24\textheight]{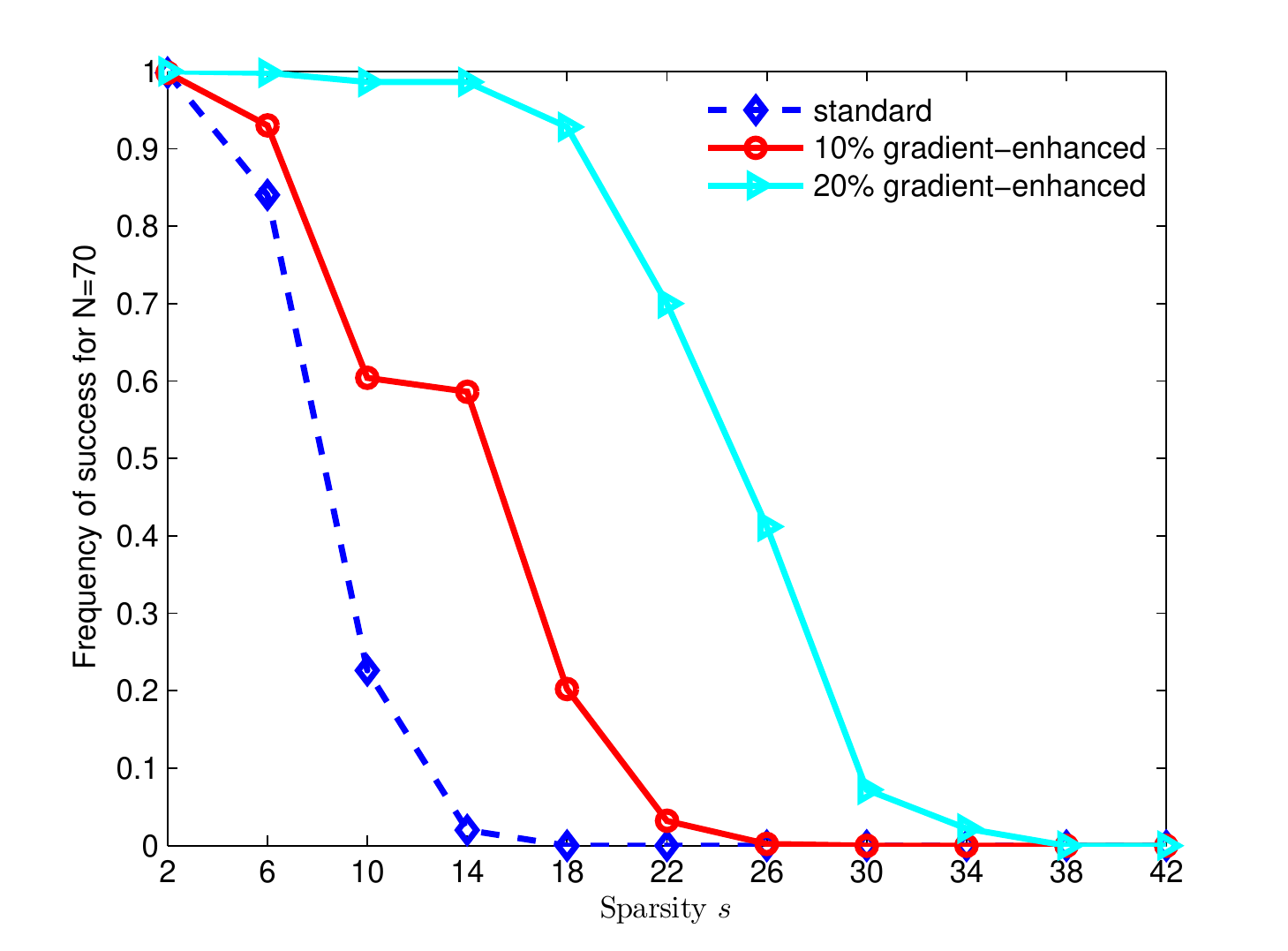}
\caption{Left: Recovery probability against number of samples, $s = 6$; Right: Recovery
probability against sparsity, $N = 70$. Two dimensional tests with $d = 10, n =3$. Legendre polynomial and Chebyshev samples.}\label{fig:legendre10d_gradient}
\end{figure}

\subsection{Applications to function approximations}
In this section, we demonstrate the utility of using gradient data to build PCE approximations for different kind of test functions defined as follows.

Sphere function:
\begin{equation*}
f_1(x)=\sum_{i=1}^dx_i^2,
\end{equation*}

Gaussian\ function:
\begin{equation*}
f_2(x)=\exp\bigg(-\sum_{i=1}^{d}0.01(1/2(x_{i}+1)-0.375)^2\bigg),
\end{equation*}

Sinusoids\ function:
\begin{equation*}
f_3(x)=\sum_{i=1}^d0.3+\sin(16/15x_i-0.7)+\sin^2(16/15x_i-0.7).
\end{equation*}

In Figure.\ref{fig:legendref1_gradient}, we consider to approximate the sphere function with Legendre polynomial chaos and random evaluations using the $\ell_1$ approach. The left plot shows the root-mean-square-error (RMSE) against the number of sample points $N$ for the two dimensional case (with $n=20$ and $M=231$), while the right plot presents the RMSE against the number of sample points for the 10-dimensional case (with $n=3$ and $M=286$). In both cases, it is clear shown that the use of gradient information can dramatically enhance the approximation accuracy. Similar tests are done for the Gaussian and Sinusoids functions, and the numerical results are presented in Figure. \ref{fig:legendref2_gradient} and Figure. \ref{fig:legendref3_gradient}, respectively.

\begin{figure}[!ht]
\centering
\includegraphics[width=0.48\textwidth,height=0.24\textheight]{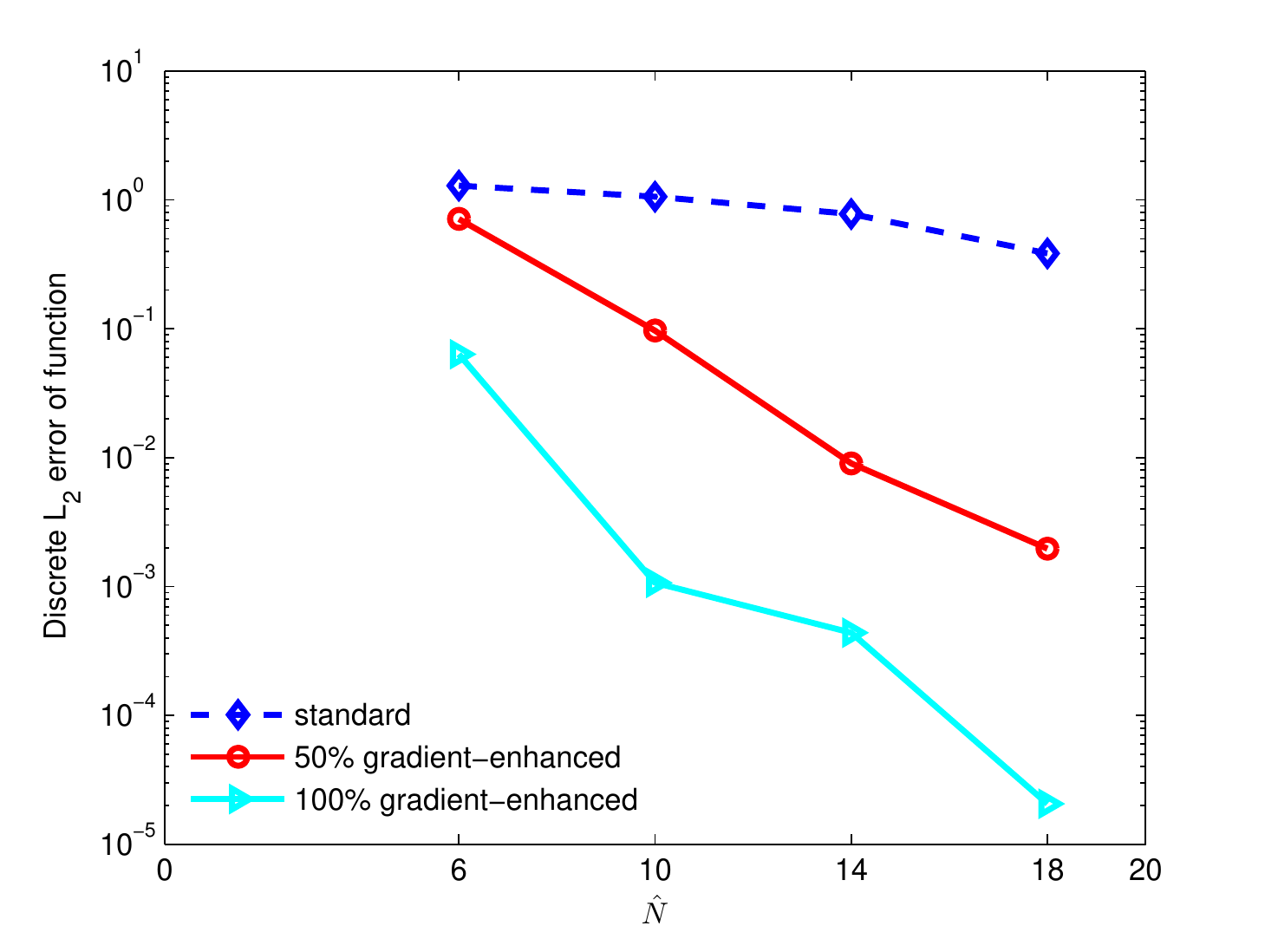}\quad
\includegraphics[width=0.48\textwidth,height=0.24\textheight]{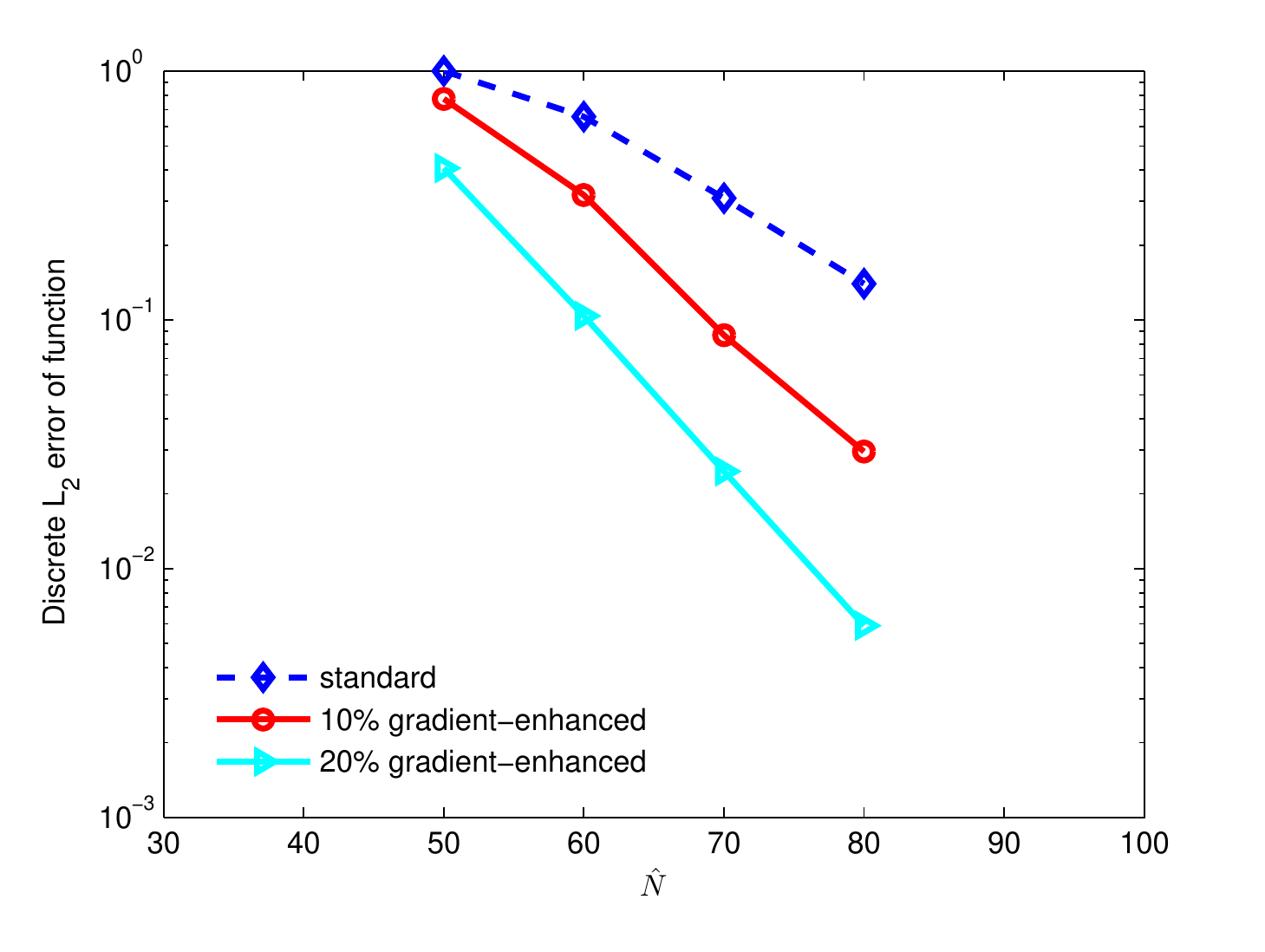}
\caption{Discrete $L_2$ error against number of samples with random points of $f_1(x)$.  Legendre polynomial with Chebyshev sampling.
Left: $d = 2, n = 20$. Right: $d = 10, n = 3$.}\label{fig:legendref1_gradient}
\end{figure}

\begin{figure}[!ht]
\centering
\includegraphics[width=0.48\textwidth,height=0.24\textheight]{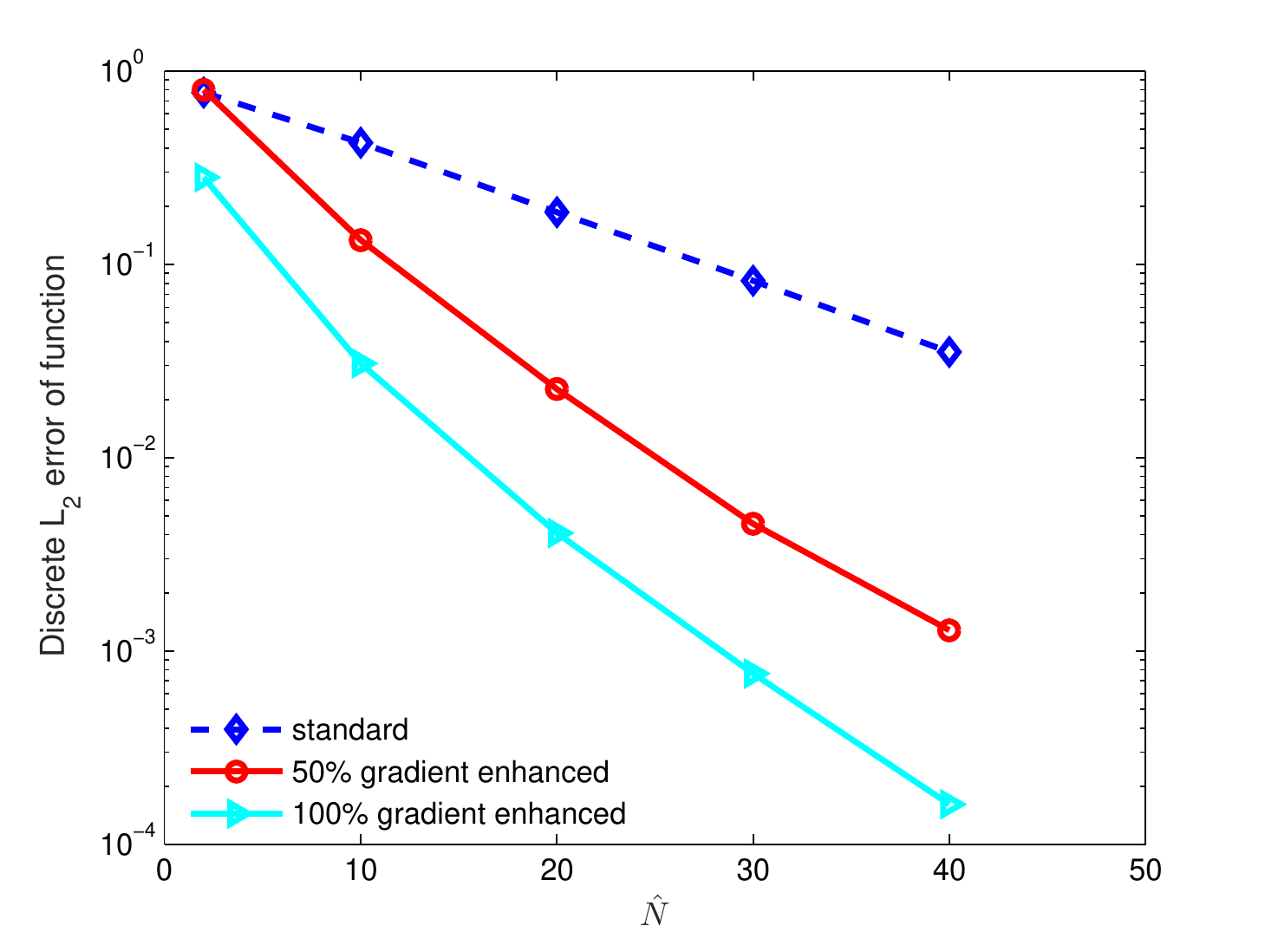}\quad
\includegraphics[width=0.48\textwidth,height=0.24\textheight]{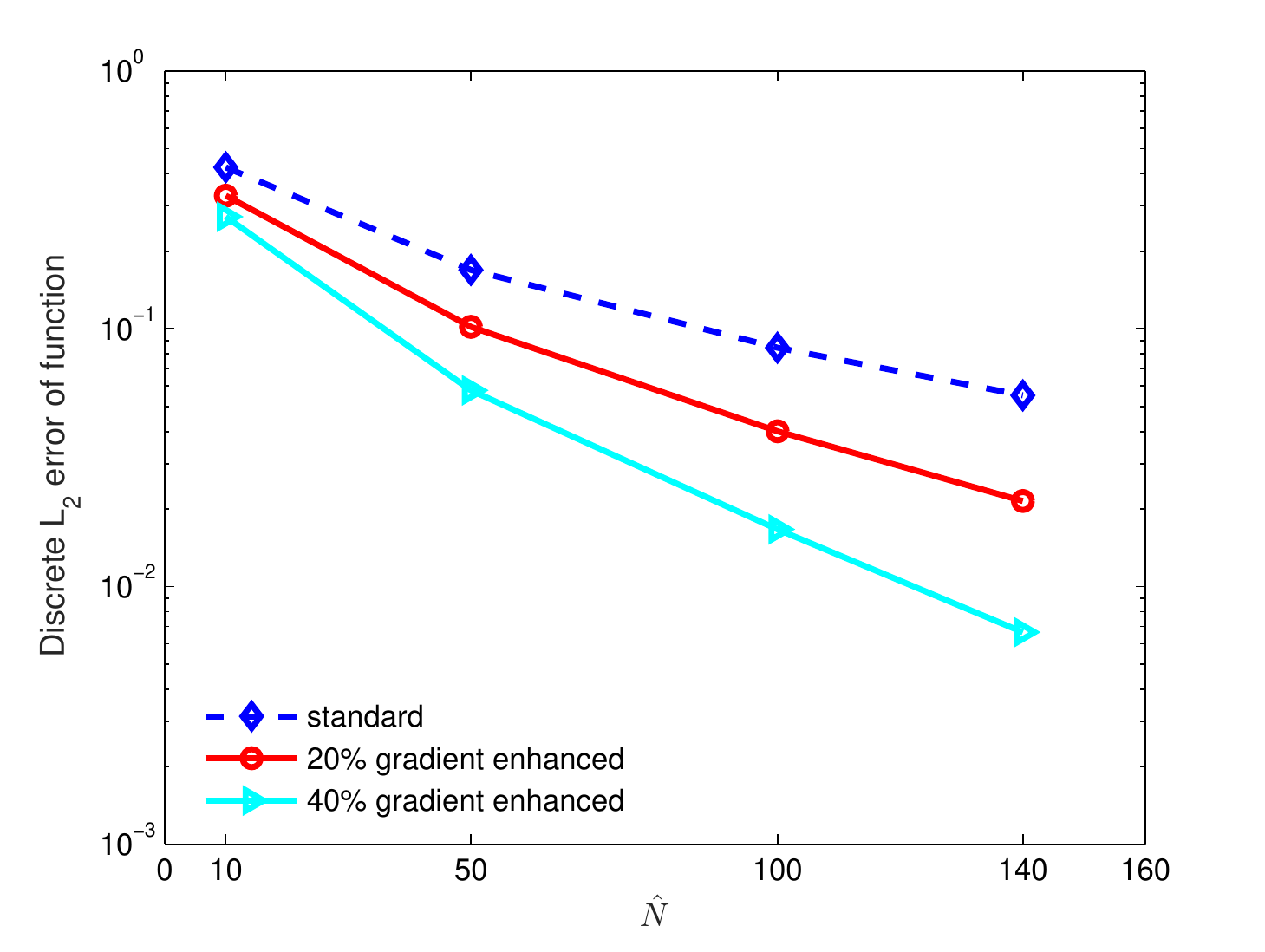}
\caption{ Discrete $L_2$ error against number of samples with random points of $f_2(x)$.  Legendre polynomial and Chebyshev samples.
Left: $d = 2, n = 20$. Right: $d = 6, n = 5$.}\label{fig:legendref2_gradient}
\end{figure}

\begin{figure}[!ht]
\centering
\includegraphics[width=0.48\textwidth,height=0.24\textheight]{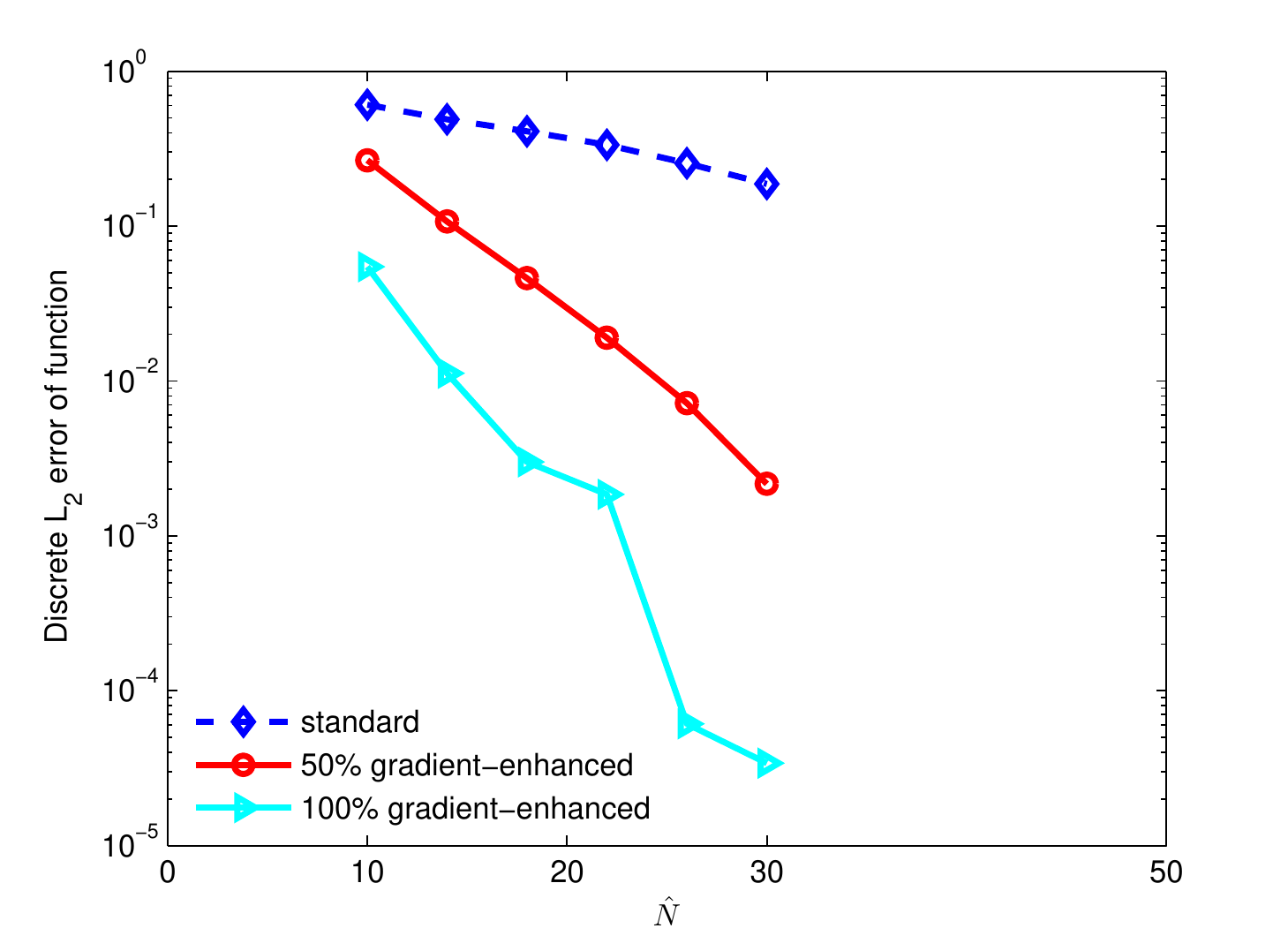}\quad
\includegraphics[width=0.48\textwidth,height=0.24\textheight]{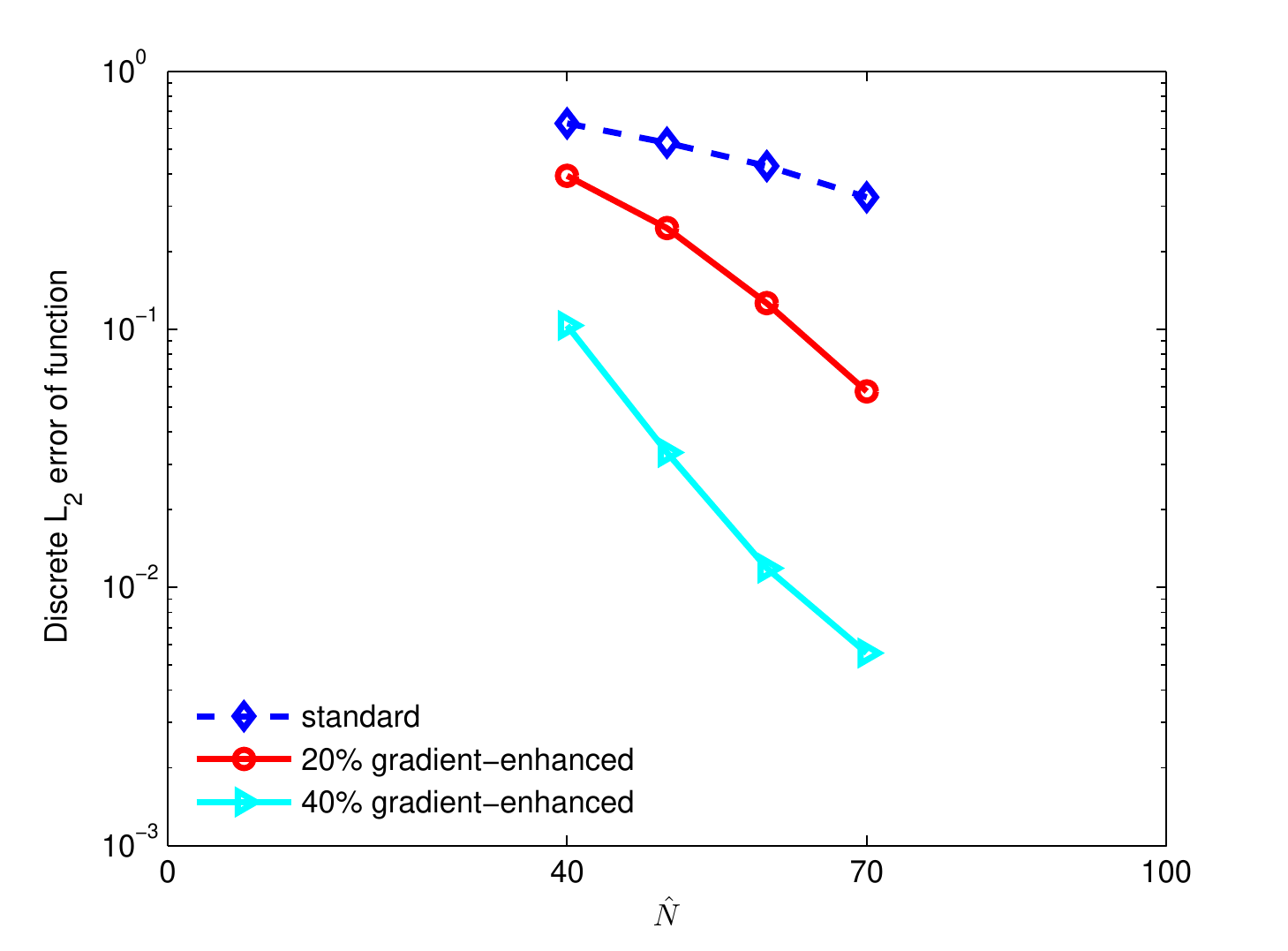}
\caption{Discrete $L_2$ error against number of samples with random points of $f_3(x)$.  Legendre polynomial and Chebyshev samples.
Left: $d = 2, n = 20$. Right: $d = 5, n = 6$.}\label{fig:legendref3_gradient}
\end{figure}

\subsection{Elliptic PDE with Random Inputs}
We next consider the following stochastic linear two-dimensional (in spatial) elliptic PDE problem
\begin{equation}\label{eq:PDEmodel}
\begin{cases}
-\nabla\cdot(a(\mathbf{y},\omega)\nabla u(\mathbf{y},\omega))=f(\mathbf{y},\omega)\quad \textmd{in} \ \mathcal{D}\times\Omega,\\
u(\mathbf{y},\omega)=0 \quad \quad \quad \quad \quad \quad \quad \quad \quad \quad \  \textmd{on} \ \partial\mathcal{D}\times\Omega,
\end{cases}
\end{equation}
with spatial domain $\mathcal{D}=[0,1]^{2}$. We take a deterministic load $f(\mathbf{y},\omega)=\cos( y_{1})\sin( y_{2})$ for these numerical examples. And construct the random diffusion coefficient $a_{N}(\mathbf{y},\omega)$ with one-dimensional spatial
dependence as in \cite{Babuka_2010SCEPDE}:
\begin{equation*}
\log\big(a_{N}(\mathbf{y},\omega)-0.5\big)=1+\xi_{1}(\omega)\Big(\sqrt{\pi}L/2\Big)^{1/2}+\sum_{i=2}^{d} \zeta_{i}g_{i}(\mathbf{y})\xi_{i}(\omega),
\end{equation*}
where
\begin{equation*}
\zeta_{i}:=(\sqrt{\pi}L)^{1/2}\exp\Big(\frac{-(\lfloor\frac{i}{2}\rfloor\pi L)^{2}}{8}\Big), \quad   i>1
\end{equation*}
and

\begin{equation*}
g_{i}(\mathbf{y}):= \begin{cases}
\sin\left(-\lfloor\frac{i}{2}\rfloor\pi  y_{1}\right),  \,\,\,  \textmd{i} \ \textmd{even},\\[12pt]
\cos\left(-\lfloor\frac{i}{2}\rfloor\pi  y_{1}\right),  \,\,\,  \textmd{i} \ \textmd{odd}.
\end{cases}
\end{equation*}

Here $\{\xi_{i}\}^{d}_{i=1}$ are uniformly distributed on the interval $[-1,1]$. We assume that $\xi_i$ are mutually independent from each other. Hence, a family of Legendre polynomials is used to approximate the quantities of interest of $\bs \xi$. Here $\mathbf{y}$ represents the physical domain. The random diffusion coefficient $a_{N}(\mathbf{y},\omega)$ used here only depends on $y_1$.  For $y_{1}\in[0,1]$, let $L=1/12$ be a desired physical correlation length for $a(\mathbf{y},\omega)$. The deterministic elliptic equation are solved by a standard finite element method with a fine mesh.

The convergence rates are shown in Fig. \ref{fig:lcPDE3d_gradient} and Fig. \ref{fig:lcPDE10d_gradient} for a low dimensional case ($d=3,n=10$) and a high dimensional case ($d=10,n=4$), respectively. In the numerical tests, we employ a FEM solver as the deterministic solver and the Monte Carlo method with $6000$ samples are used to get the reference mean and standard deviation of the solution. The gradient information is obtained by solving the adjoint equation as in \cite{jakeman}.  Finally, the numerical error of our approach for the mean and standard deviation are presented. We learn again in the pictures that the gradient-enhanced approach performs much better than the standard $\ell_1$ approach.

\begin{figure}[!ht]
\centering
\includegraphics[width=0.48\textwidth,height=0.24\textheight]{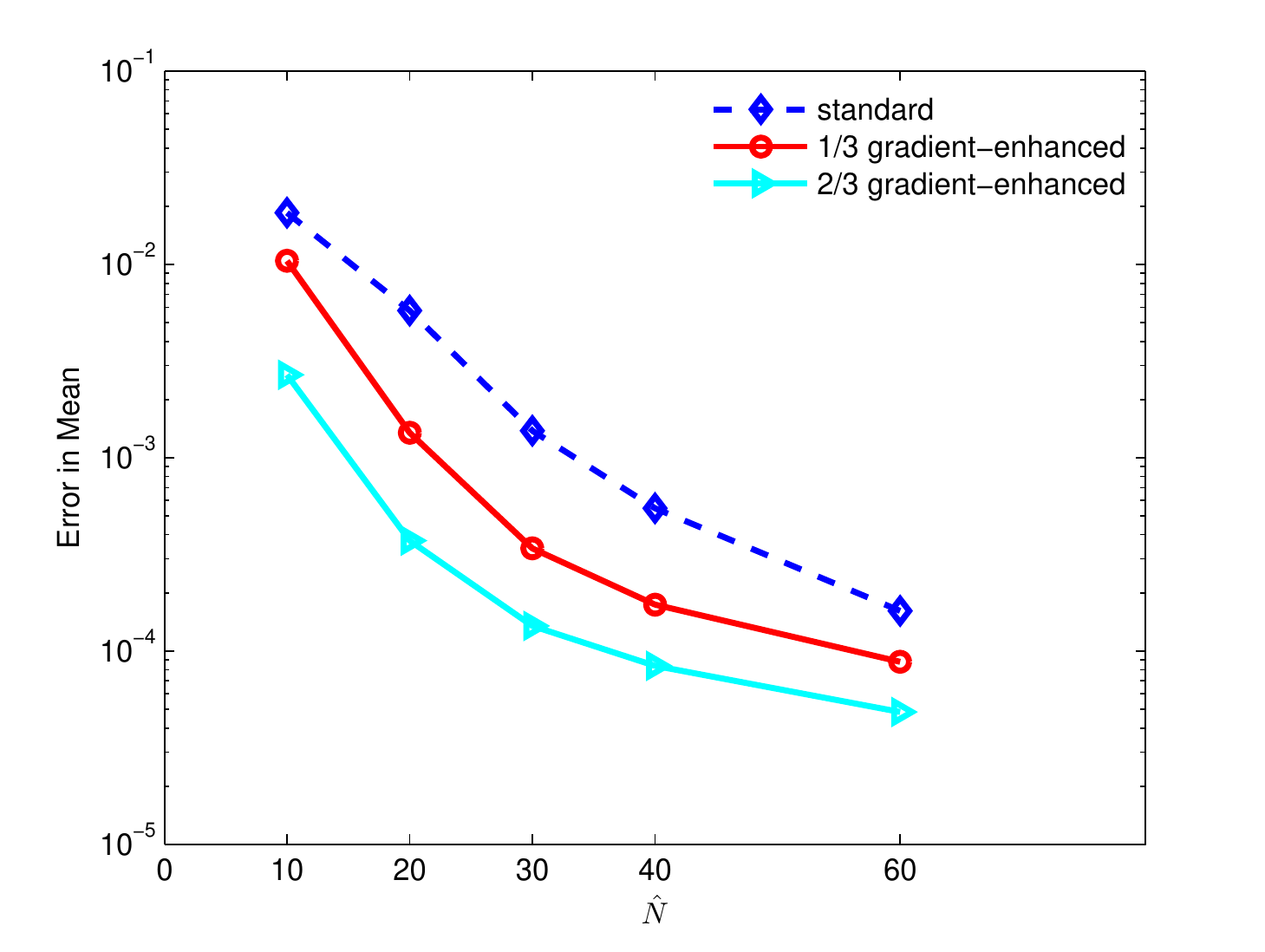}\quad
\includegraphics[width=0.48\textwidth,height=0.24\textheight]{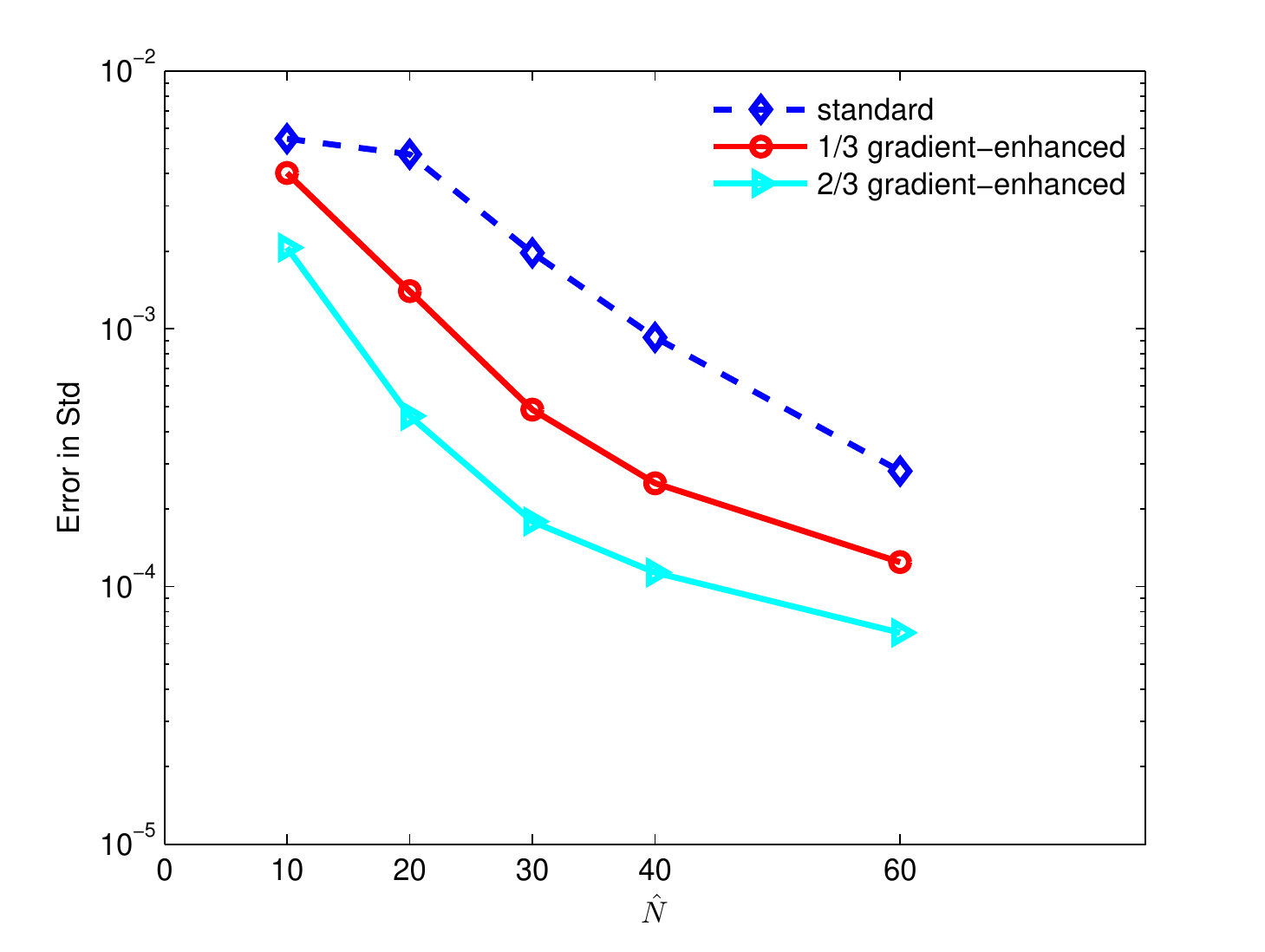}
\caption{Error in $\ell_2$ norm of the mean and variance between the reference and approximation for the various gradient-enhanced method as a function of the number of samples $N$. $d = 3, n = 10.$}\label{fig:lcPDE3d_gradient}
\end{figure}

\begin{figure}[!ht]
\centering
\includegraphics[width=0.48\textwidth,height=0.24\textheight]{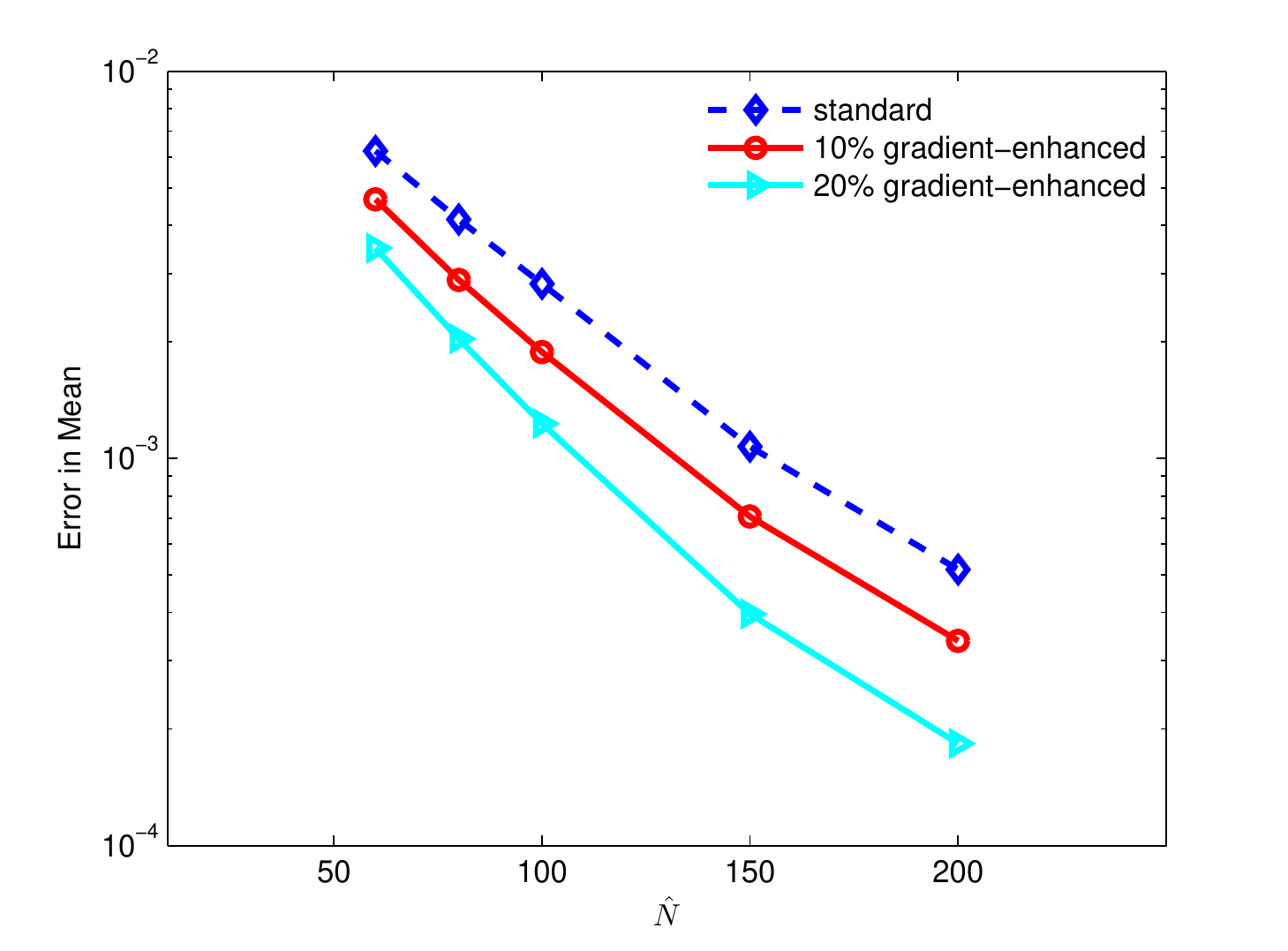}\quad
\includegraphics[width=0.48\textwidth,height=0.24\textheight]{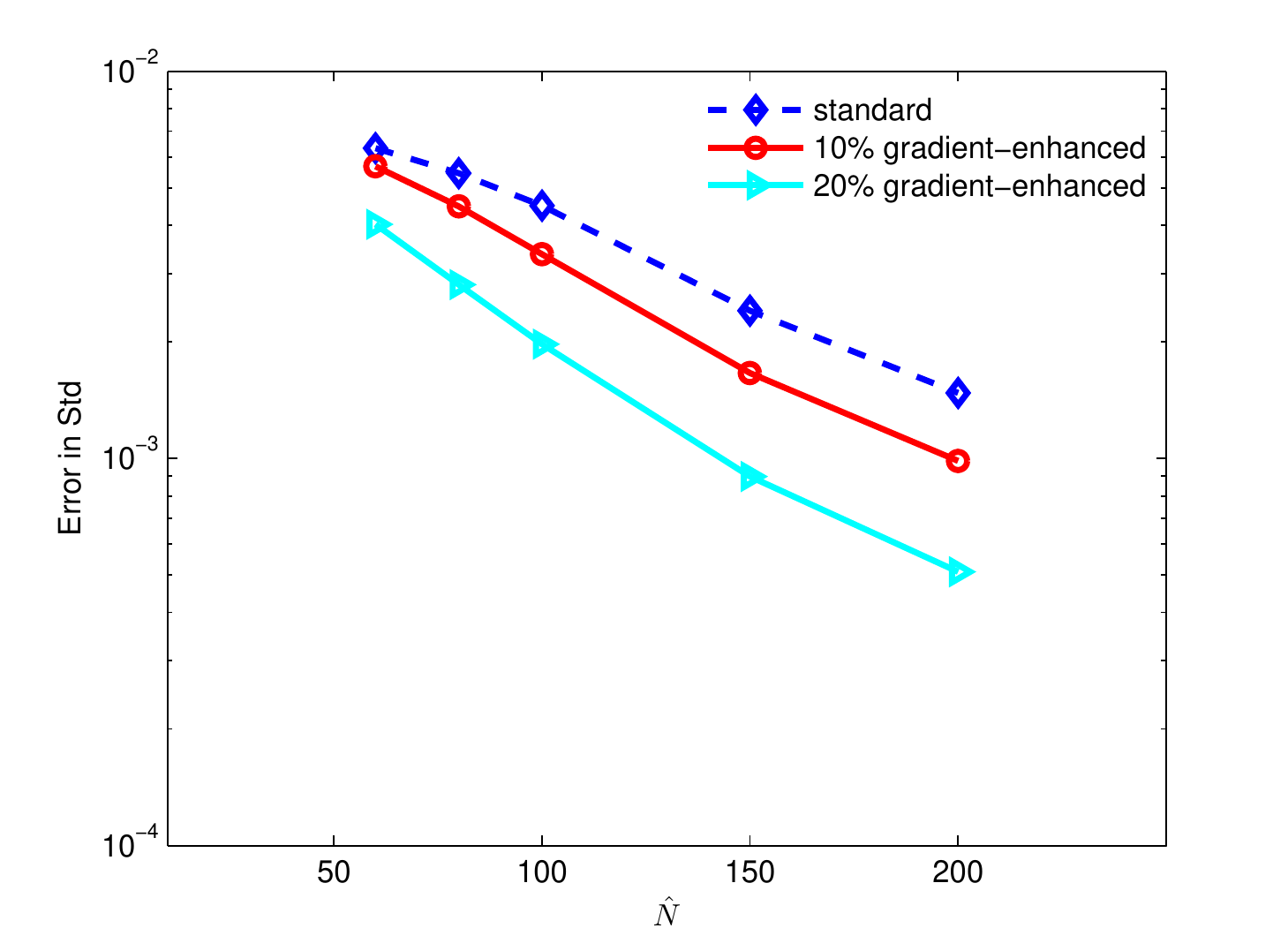}
\caption{Error in $\ell_2$ norm of the mean and variance between the reference and approximation for the various gradient-enhanced method as a function of the number of samples $N$. $d = 10, n =4.$}\label{fig:lcPDE10d_gradient}
\end{figure}

\section{Conclusion}

In this work, we present a general framework for the gradient-enhanced $\ell_1$-minimization for constructing the sparse polynomial chaos expansions. By designing appropriate pre-conditioners to the measure matrix, we show the inclusion of derivative information can indeed improve the recovery property. And the framework is quite general and it applies to both problems with bounded random input and unbounded random input. Several numerical examples are presented to support the theoretical finding.

\section*{Acknowledgments}
We would like to thank Prof. Dongbin Xiu from Ohio State university for introducing this topic to us a few years ago, and also for his very helpful comments.

\appendix

\section{Proofs}

In this appendix we collect results which imply the results given by theorem \ref{th:main}. These are essentially well-known results in the theory of orthogonal polynomials. Our analysis uses these well-known results in fairly straightforward ways.

Let $p_n^{(\alpha, \beta)}(x)$ be degree-$n$ Jacobi polynomial orthonormal under the Jacobi probability weight $\rho^{(\alpha, \beta)}(x)$ defined in (\ref{eq:jacobi-density}), i.e.,
\begin{align*}
\int_{-1}^1 p_n^{(\alpha,\beta)}(x) p_m^{(\alpha,\beta)}(x) \rho^{(\alpha, \beta)}(x) \dx = \delta_{n,m},
\end{align*}
The following identity holds:
\begin{align}\label{eq:jacobi-derivative}
 \frac{\mathd }{\mathd x} p_n^{(\alpha,\beta)}(x) &= c(n,\alpha, \beta) p_{n-1}^{(\alpha+1,\beta+1)}(x) , \\
 c^2(n,\alpha, \beta) &= n (n+\alpha + \beta + 1) \frac{(\alpha + \beta+1)(\alpha + \beta + 2)}{4 (\alpha+1)(\beta+1)}
\end{align}
The following result is also critical for us:
\begin{lemma}[\cite{Jacobi}]\label{eq:jacobi-bound-le}For all Jacobi weight functions $\rho^{(\alpha, \beta)}(x)$ with $\alpha\geq -\frac{1}{2}$ and $\beta\geq -\frac{1}{2}$, the following inequalities hold,
\begin{align}\label{eq:jacobi-bound}
  \sup_{x \in [-1,1]} \frac{\rho^{(\alpha, \beta)}(x)}{\rho_c(x)} \left[ p_n^{(\alpha, \beta)}(x) \right]^2 \leq 2 e \left( 2 + \sqrt{\alpha^2 + \beta^2}\right),
\end{align}
uniformly in $n, \alpha, \beta$.
\end{lemma}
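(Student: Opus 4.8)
The plan is to convert the inequality into a statement about the peak-to-average ratio of a single normalized Jacobi function, after which the normalization constant $d^{(\alpha,\beta)}$ drops out entirely. First I would make the weight ratio explicit: since $\rho_c=\rho^{(-1/2,-1/2)}$ is the Chebyshev density $\tfrac{1}{\pi}(1-x^2)^{-1/2}$, definition \eqref{eq:jacobi-density} gives
\[
\frac{\rho^{(\alpha,\beta)}(x)}{\rho_c(x)} = \pi\, d^{(\alpha,\beta)}\,(1-x)^{\alpha+\frac12}(1+x)^{\beta+\frac12}.
\]
Putting $x=\cos\theta$, so that $1-x=2\sin^2(\theta/2)$ and $1+x=2\cos^2(\theta/2)$, and setting
\[
u_n(\theta) = \left(\sin\tfrac\theta2\right)^{\alpha+\frac12}\left(\cos\tfrac\theta2\right)^{\beta+\frac12} p_n^{(\alpha,\beta)}(\cos\theta),
\]
the quantity inside the supremum in \eqref{eq:jacobi-bound} equals $\frac{\pi\,\Gamma(\alpha+\beta+2)}{\Gamma(\alpha+1)\Gamma(\beta+1)}\,u_n(\theta)^2$. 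The key observation is that the normalization $\int_{-1}^1\rho^{(\alpha,\beta)}[p_n^{(\alpha,\beta)}]^2\,dx=1$ transforms, under the same substitution, into $\int_0^\pi u_n^2\,d\theta = \frac{\Gamma(\alpha+1)\Gamma(\beta+1)}{\Gamma(\alpha+\beta+2)}$. Hence the Gamma factors cancel and the target reduces to
\[
\sup_{x\in[-1,1]}\frac{\rho^{(\alpha,\beta)}(x)}{\rho_c(x)}\left[p_n^{(\alpha,\beta)}(x)\right]^2 = \pi\,\frac{\sup_{\theta\in[0,\pi]}u_n(\theta)^2}{\int_0^\pi u_n(\theta)^2\,d\theta},
\]
so everything comes down to bounding the peak-to-average ratio of $u_n^2$ by $2e(2+\sqrt{\alpha^2+\beta^2})/\pi$.

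The case $n=0$ is an easy warm-up: there $u_0^2$ is the Beta-type weight $(\sin\tfrac\theta2)^{2\alpha+1}(\cos\tfrac\theta2)^{2\beta+1}$, whose peak-to-average ratio is an explicit Gamma ratio that grows only like $(\alpha^2+\beta^2)^{1/4}$, comfortably inside the claimed bound. For $n\ge1$ the function $u_n$ oscillates, and I would control it through the trigonometric Jacobi equation $u_n''+\Lambda(\theta)u_n=0$ with
\[
\Lambda(\theta) = \left(n+\tfrac{\alpha+\beta+1}{2}\right)^2 + \frac{\tfrac14-\alpha^2}{4\sin^2(\theta/2)} + \frac{\tfrac14-\beta^2}{4\cos^2(\theta/2)}.
\]
Sonin's energy $E=u_n^2+(u_n')^2/\Lambda$ obeys $E'=-\Lambda'(u_n')^2/\Lambda^2$, hence is monotone on each interval where $\Lambda$ is monotone; since $u_n^2\le E$ on $\{\Lambda>0\}$, the successive local maxima of $u_n^2$ increase toward the minimizer of $\Lambda$. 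In the oscillatory bulk the integral $\int u_n^2$ accumulates about one half of each local peak over many periods, which pins the ratio near the universal value $2$ (for $\alpha=\beta=-\tfrac12$ one has exactly $u_n^2=2\cos^2 n\theta$ and ratio $2$, reproducing the leading factor in \eqref{eq:jacobi-bound}).

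The binding contribution comes from the endpoints. When $\alpha$ (or $\beta$) is large the mass of $u_n^2$ migrates toward $\theta=\pi$ (or $\theta=0$), exactly where $\Lambda$ has a turning point and the Sonin energy no longer dominates $u_n^2$; there I would replace the energy estimate by the local Mehler--Heine/Bessel description of $p_n^{(\alpha,\beta)}$, whose first extremum near the endpoint is governed by $J_\alpha$ (resp. $J_\beta$) and whose height, measured against $\int_0^\pi u_n^2$, produces the parameter growth $\sqrt{\alpha^2+\beta^2}$. Assembling the constant $2e(2+\sqrt{\alpha^2+\beta^2})$ then requires combining this Bessel-regime peak with Stirling/Gautschi estimates for the Gamma ratios and verifying that the large-$n$ endpoint configuration is the worst case over $n$. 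I expect this explicit-constant extraction to be the main obstacle: the qualitative, uniform-in-$n$ boundedness of the peak-to-average ratio follows cleanly from Sonin's lemma, but producing the stated closed form uniformly in $n,\alpha,\beta$ --- with the interplay of the Bessel maxima and the Gamma estimates that yields the factor $e$ and the precise $2+\sqrt{\alpha^2+\beta^2}$ --- is delicate, which is presumably why the estimate is imported from \cite{Jacobi} rather than reproved here.
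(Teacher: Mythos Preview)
The paper does not prove this lemma; it is quoted verbatim from the cited reference \cite{Jacobi}, as you correctly anticipate in your final sentence. There is therefore no proof in the paper to compare against, and your sketch stands as an independent attempt.

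As an outline your approach is sound and is the standard one: the Liouville substitution $x=\cos\theta$ reducing the Jacobi equation to the form $u_n''+\Lambda u_n=0$, Sonin's energy monotonicity for control in the oscillatory bulk, and Mehler--Heine/Bessel asymptotics near the endpoints are precisely the tools used in the literature for weighted sup bounds of this type. Your peak-to-average reformulation is correct and makes the cancellation of the Gamma factors transparent. You have also correctly located the real obstacle: qualitative boundedness follows from these ingredients, but extracting the explicit constant $2e\bigl(2+\sqrt{\alpha^2+\beta^2}\bigr)$ \emph{uniformly} in $n,\alpha,\beta$ requires carefully tracking the Bessel-regime extremum and the Gamma ratios through the transition zone, and your sketch stops short of doing this. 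That is not a logical gap so much as an honest acknowledgment that the argument is incomplete at exactly the point where the hard work lies --- consistent with the paper's own decision to import the result rather than reprove it.
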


Therefore, consider performing $\ell_1$ optimization with derivative evaluations using Jacobi polynomials and Chebyshev sampling, as in Section \ref{sec:jacobi}. Each univariate sample $z$ yields two rows of the design matrix, whose entries are
\begin{align*}
 \left( \begin{array}{c} p_n^{(\alpha,\beta)}(z) \\ \ddx{x} p_n^{(\alpha,\beta)}(z) \end{array} \right) = \left( \begin{array}{c} p_n^{(\alpha,\beta)}(z) \\ c(n,\alpha,\beta) p_{n-1}^{(\alpha+1,\beta+1)}(z) \end{array} \right)
\end{align*}
Since $z$ is distributed according to the Chebyshev measure, we need to precondition these rows in order to keep mean isotropy:
\begin{align*}
 \left( \begin{array}{c} \sqrt{\frac{\rho^{(\alpha,\beta)}(z)}{\rho_c(z)}} p_n^{(\alpha,\beta)}(z) \\ \frac{1}{c(n,\alpha,\beta)} \sqrt{\frac{\rho^{(\alpha+1,\beta+1)}(z)}{\rho_c(z)}} \ddx{x} p_n^{(\alpha,\beta)}(z) \end{array} \right) = \left( \begin{array}{c} \sqrt{\frac{\rho^{(\alpha,\beta)}(z)}{\rho_c(z)}} p_n^{(\alpha,\beta)}(z) \\ \sqrt{\frac{\rho^{(\alpha+1,\beta+1)}(z)}{\rho_c(z)}} p_{n-1}^{(\alpha+1,\beta+1)}(z) \end{array} \right)
\end{align*}
Extending this result to the multivariate (tensor-product) case, we can show that the identity \eqref{eq:jacobi_chebyshev} holds for a multi-index $i$. Now we turn to the proof of our main result, Theorem \ref{th:main}.

\subsection{Proof of Theorem \ref{th:main}} For positive $a_0, \ldots, a_{d}$ and $b_0, \ldots, b_d$, the inequality
\begin{align*}
  \frac{ \sum_{j=0}^d a_j}{\sum_{j=1}^d b_j} \leq \max_{j=0, \ldots, d} \frac{a_j}{b_j}
\end{align*}
holds. Define
\begin{align*}
 & \sqrt{a_0(\bs{i}, \bs{x})} \coloneqq \sqrt{\frac{\rho^{(\bs{\alpha},\bs{\beta})}(\bs{x})}{\rho_c(\bs{x})}} \Phi_{\bs{i}}(\bs{x}),&  \sqrt{a_j(\bs{i}, \bs{x})} \coloneqq \sqrt{\frac{\rho^{(\bs{\alpha}+\bs{e}_j,\bs{\beta}+\bs{e}_j)}(\bs{x})}{\rho_c(\bs{x})}} \ppx{x_j} \Phi_{\bs{i}}(\bs{x}),  \\
 & \sqrt{b_0(\bs{i}, \bs{x})} \coloneqq 1, &
  \sqrt{b_j(\bs{i}, \bs{x})} \coloneqq c\left( i_j, \alpha_j, \beta_j \right), \quad j = 1, \ldots, d
\end{align*}
Note that
\begin{align*}
  \gamma_j &\coloneqq \frac{b_j(\bs{i}, \bs{x})}{\sum_{k=0}^d b_k(\bs{i}, \bs{x})} \in [0, 1], \quad \sum_{j=0}^d \gamma_j = 1,
\end{align*}
so that the $\gamma_j$ are convex weights. Then we can rewrite
\begin{align*}
\mu\left( \bs{\Phi}\right) &= \sup_{\bs{i}, \bs{z}} \frac{a_0(\bs{i}, \bs{z})}{b_0(\bs{i}, \bs{z})} \\
\mu\left( \bs{\widehat{\Phi}}\right) &= \sup_{\bs{i}, \bs{z}} \frac{ \sum_{j=0}^d a_j(\bs{i}, \bs{z})}{\sum_{j=0}^d b_j(\bs{i}, \bs{z})} = \sup_{\bs{i},\bs{z}} \sum_{j=0}^d \frac{a_j(\bs{i},\bs{z})}{\sum_{k=0}^d b_j(\bs{i},\bs{z})}
\end{align*}
Note that
\begin{align*}
  \frac{a_0}{b_0} &= \frac{\rho^{(\bs{\alpha},\bs{\beta})}(\bs{x})}{\rho_c(\bs{x})} \Phi^2_{\bs{i}}(\bs{x}),\\
  \frac{a_0}{\sum_{k=0}^d b_k} &= \frac{b_0}{\sum_{k=0}^d b_k} \frac{\rho^{(\bs{\alpha},\bs{\beta})}(\bs{x})}{\rho_c(\bs{x})} \Phi^2_{\bs{i}}(\bs{x}) = \gamma_0\frac{\rho^{(\bs{\alpha},\bs{\beta})}(\bs{x})}{\rho_c(\bs{x})} \Phi^2_{\bs{i}}(\bs{x})  \\
  \frac{a_j}{\sum_{k=0}^d b_k} &\stackrel{\eqref{eq:jacobi-derivative}}{=} \frac{b_j}{\sum_{k=0}^d b_k} \frac{\rho^{(\bs{\alpha}+\bs{e}_j,\bs{\beta}+\bs{e}_j)}(\bs{x})}{\rho_c(\bs{x})} \Phi^2_{\bs{i}-\bs{e}_j}(\bs{x}) = \gamma_j \frac{\rho^{(\bs{\alpha}+\bs{e}_j,\bs{\beta}+\bs{e}_j)}(\bs{x})}{\rho_c(\bs{x})} \Phi^2_{\bs{i}-\bs{e}_j}(\bs{x}) \hskip 10pt (1 \leq j \leq d)
\end{align*}
Therefore,
\begin{align*}
\mu\left( \bs{\widehat{\Phi}}\right)  &= \sup_{\bs{i},\bs{z}} \sum_{j=0}^d \gamma_j \frac{\rho^{(\bs{\alpha}+\bs{e}_j,\bs{\beta}+\bs{e}_j)}(\bs{z})}{\rho_c(\bs{z})} \Phi^2_{\bs{i}-\bs{e}_j}(\bs{z}) \\
&\stackrel{\eqref{eq:jacobi-bound}}{\leq} \sum_{j=0}^d \gamma_j \prod_{k=1}^d 2 e \left( 2 + \sqrt{(\alpha_j+\delta_{k,j})^2 + (\beta_j+\delta_{k,j})^2} \right)\\
&= \left[ \prod_{k=1}^d 2 e \left( 2 + \sqrt{(\alpha_j)^2 + (\beta_j)^2} \right)\right] \sum_{j=0}^d \gamma_j \frac{2 + \sqrt{(\alpha_j+1)^2 + (\beta_j+1)^2} }{2 + \sqrt{\alpha_j^2 + \beta_j^2}} \\
&\leq \left[ \prod_{k=1}^d 2 e \left( 2 + \sqrt{(\alpha_j)^2 + (\beta_j)^2} \right)\right] \max_{j=1, \ldots, d}\frac{2 + \sqrt{(\alpha_j+1)^2 + (\beta_j+1)^2}}{2 + \sqrt{\alpha_j^2 + \beta_j^2}}
\end{align*}
Therefore, \eqref{eq:d-coherence-bound} holds with
\begin{align*}
C =\max_{j=1, \ldots, d} \frac{2 + \sqrt{(\alpha_j+1)^2 + (\beta_j+1)^2}}{2 + \sqrt{\alpha_j^2 + \beta_j^2}}
\end{align*}

The proof for the second statement is similar as in \cite{Peng_2016gradient}: we notice that up to an invertible post-multiplication, $\mathbf{\widehat{\Phi}}$ is a sub-matrix of $\mathbf{\Phi}$ and thus $\mathcal{N}\big(\mathbf{\widehat{\Phi}}\big) \subset \mathcal{N}\big(\mathbf{\Phi}\big).$

\bibliographystyle{plain}
\bibliography{derivative_recovery_new}

\end{document}